\newcommand{\prarrow}[2]{\ar@<0.5ex>[r]^-{#1} \ar@<-0.5ex>[r]_-{#2}}
\newcommand{\plarrow}[2]{\ar@<0.5ex>[l]^-{#1} \ar@<-0.5ex>[l]_-{#2}}
\newcommand{\pdarrow}[2]{\ar@<0.5ex>[d]^-{#1} \ar@<-0.5ex>[d]_-{#2}}
\newcommand{\puarrow}[2]{\ar@<0.5ex>[u]^-{#1} \ar@<-0.5ex>[u]_-{#2}}
\newtheorem{theorem}{Theorem}[section]
\newtheorem{corollary}[theorem]{Corollary}    
\newtheorem{lemma}[theorem]{Lemma}
\theoremstyle{definition}
\newtheorem{definition}[theorem]{Definition} \newtheorem{question}{Question}  
\newtheorem{example}[theorem]{Example}    
\newtheorem{remark}[theorem]{Remark}
\author{Hayato Imamura}
\address{Faculty of Economics, The International University of Kagoshima, 8-34-1 Sakanoue, Kagoshima-shi, Kagoshima, 891-0197, Japan.}
\email{hayato-imamura@asagi.waseda.jp}
\author{Eiichi Matsuhashi}
\address{Department of Mathematics, Shimane University, Matsue, Shimane, 690-8504, Japan.}
\email{matsuhashi@riko.shimane-u.ac.jp}
\title{Some theorems on decomposable continua}
\author{Yoshiyuki Oshima}
\address{Department of Mathematics, Shimane University, Matsue, Shimane, 690-8504, Japan.}
\email{oshima@riko.shimane-u.ac.jp}
\begin{document}

\begin{abstract}
We prove some theorems on decomposable continua. In particular, we prove;

(i) the property of being a Wilder continuum is not a Whitney reversible property,

(ii)  inverse limits of $D^{**}$-continua with surjective monotone upper semi-continuous bonding functions are $D^{**}$, and

(iii) there exists a $D^{**}$-continuum which contains neither Wilder continua nor $D^{*}$-continua. 

\vspace{1mm}

Also, we show the existence of a Wilder continuum containing no $D^*$-continua and a $D^*$-continuum  containing no Wilder continua.

\end{abstract}

\keywords{Whitney reversible property, Wilder continua, $D^*$-continua, $D^{**}$-continua, inverse limits}
\subjclass[2020]{Primary  54F15 ; Secondary 54F16, 54F17}      

\maketitle
\markboth{}{Hayato Imamura, Eiichi Matsuhashi and Yoshiyuki Oshima}

\section{Introduction}
    In this paper, all spaces are assumed to be metrizable. A $map$ is a continuous single-valued function. A map $f:X\to Y$ is said to be $monotone$ if for each $y\in Y$, $f^{-1}(y)$ is connected.
 If $X$ is a space and $A$ is a subset of $X$, then we denote the closure of $A$ in  $X$ by ${\rm Cl}_X A$. 
 By a $continuum$ we mean a compact connected metric space. An $arc$ is a space which is homeomorphic to the closed interval $[0,1]$. If $A$ is an arc, then we denote the set of end points of $A$ by $E(A)$. A continuum
is said to be $decomposable$ if it is  the union of two proper subcontinua. If a continuum $X$ is not decomposable, then $X$ is said to be $indecomposable$.

Let $X$ be
a continuum.  Then, $2^X$ denotes the space of all closed subsets of $X$ with the topology generated by the Hausdorff metric. Also, $C(X)$ denotes the  subspace of $2^X$ consisting of all nonempty subcontinua of $X$.  
%$C(X)$ is called the $hyperspace$ $of$ $X$. 
A $Whitney$
$map$  is a map $\mu : C(X) \to [0,\mu(X))$ satisfying  $\mu(\{x\})=0$ for each $x \in X$, and $\mu(A) < \mu(B)$  whenever $A, B \in C(X)$ and $A \subsetneq B$. It is well-known  that for each Whitney map  $\mu: C(X) \to [0,\mu(X)]$   
%(see \cite[Definition 13.1]{illanes}),  
and for each $t \in [0, \mu(X)]$,  $\mu^{-1}(t)$ is a continuum (\cite[Theorem 19.9]{illanes}). 
% Each 
 %$\mu^{-1}(t)$ is called a $Whitney$ $level$. 

 A topological property $P$ is called a {\it Whitney property} if a continuum $X$ has property $P$, so does $\mu^{-1}(t)$ for any Whitney map  $\mu$ for $C(X)$ and for any $t \in [0, \mu(X))$.   Also, a topological property $P$ is called a {\it Whitney reversible property} provided that whenever $X$ is a continuum such that $\mu^{-1}(t)$ has property $P$ for each Whitney map $\mu$ for $C(X)$ and for each $t \in (0, \mu(X))$, then $X$ has property $P$. %With respect to these properties, many researches have been done so far. 
 For  information about Whitney properties and Whitney reversible properties, see \cite[Chapter 8]{illanes}.

In this paper, we show some theorems on decomposable continua. In particular, we deal with topics on Wilder continua, $D$-continua, $D^{*}$-continua and $D^{**}$-continua (definitions of these continua are given in the following section). Studies on these continua are seen in  \cite{wwp}, \cite{D}, \cite{loncar1}, \cite{loncar2}, \cite{loncar3}, \cite{loncar4}, \cite{D**} and \cite{wilder2}. 

\vspace{1mm}

The paper is divided as follows:

In Section 2, we introduce some basic definitions and terminology that are used throughout this paper. 

In Section 3, we show that the properties of being a Wilder continuum is not a Whitney reversible property. This result gives a partial answer to \cite[Qusetion 5.2]{D**}.  Note that the properties of being a Wilder continuum, being a $D$-continuum, being a $D^{*}$-continuum and being a $D^{**}$-continuum are all Whitney properties (see \cite{D**}). 

In Section 4, we deal with topics on inverse  limits with set-valued bonding functions. The notion of inverse limits with set-valued bonding functions was introduced by Ingram and Mahavier (\cite{ingram-mahavier}). 
In  Section 4,  we prove that inverse limits of $D^{**}$-continua with surjective monotone upper semi-continuous bonding functions are $D^{**}$. This result is in contrast to \cite[Examples 5.1 and 5.2]{oshima},  which state that there exists an inverse sequence $\{X_i,f_i\}_{i=1}^{\infty}$ of $D$-continua (resp. $D^*$-continua) with surjective monotone upper semi-continuous bonding functions such that $\underleftarrow{{\rm lim}}\{X_i,f_i\}_{i=1}^{\infty}$ is not a $D$-continuum (resp. a $D^*$-continuum).  %For further information, see \cite{oshima}.

In Section 5, we show that  there exists a $D^{**}$-continuum which contains neither Wilder continua nor $D^{*}$-continua. Also, we show the existence of a Wilder continuum containing no $D^*$-continua and a $D^*$-continuum  containing no Wilder continua.

 \section{preliminaries}

In this section, we introduce some basic definitions and terminology.

 \begin{definition}
 A continuum $X$ is called a {\it Wilder continuum} if for any three  distinct points $x,y,z \in X$, there exists a subcontinuum $C$ of  $X$ such that $x \in C$ and  $C$ contains exactly one of $y$ and $z$.  
     
 \end{definition}

 Wilder introduced the notion of Wilder continua in \cite{wilder2}.\footnote{Wilder  named the continua as $C$-continua. However, we use the term Wilder continua following the notion proposed in \cite{kk}.}  

 \vspace{1mm}

 Let  $X$ be a continuum. If $X$ is not Wilder, then there exist three distinct points $x,y,z \in X$ such that for each subcontinuum $C$ of $X$ with $x \in C$ and $\{y,z\} \cap C \neq \emptyset$, $\{y,z\} \subseteq C$.  In this case, we say that $y$ and $z$ are $points$ $of$ $non$-$Wilderness$ $of$ $X$.

\begin{definition}
Let $X$ be a continuum. Then: 
\begin{itemize}
\item The continuum $X$ is called a {\it D-continuum} if for any pairwise disjoint nondegenerate subcontinua $A$ and $B$ of $X$, there exists a subcontinuum $C$ of $X$ such that 

(i)  $A \cap C \neq \emptyset \neq B \cap C$, and

(ii) $A \setminus C \neq \emptyset$ or $B \setminus C \neq \emptyset$.

\vspace{1mm}

\item The continuum $X$ is called a {\it $D^{**}$-continuum} if for any pairwise disjoint nondegenerate subcontinua $A$  and  $B$ of $X$, there exists a subcontinuum $C$ of $X$ such that 

(i)  $A \cap C \neq \emptyset \neq B \cap C$, and

(ii) $B \setminus C \neq \emptyset$.

\vspace{1mm}

\item The continuum $X$ is called a {\it $D^*$-continuum} if for any pairwise disjoint nondegenerate subcontinua $A$ and $B$ of $X$, there exists a subcontinuum $C$ of $X$ such that 

(i)  $A \cap C \neq \emptyset \neq B \cap C$, and

(ii) $A \setminus C \neq \emptyset$ and $B \setminus C \neq \emptyset$.
\end{itemize}

\end{definition}

Note that a continuum $X$ is a $D^{**}$-continuum if and only if for each $a \in X$ and for each nondegenerate subcontinuum $B$ of $X$ with $a \notin B$, there exists a subcontinuum $C$ of $X$ such that $a \in C$, $B \cap C \neq \emptyset$ and $B \setminus C \neq \emptyset$.

%It is easy to see that  arcwise connected continuum is Wilder and $D^*$. 

It is clear that every $D^*$-continuum is a $D^{**}$-continuum and every $D^{**}$-continuum   is a $D$-continuum. Note that the $\sin \frac{1}{x}$-continuum is a $D$-continuum which is not a $D^{**}$-continuum.  %Also, each Wilder continuum is a $D$-continuum \cite[Theorem 5.18]{loncar4}.

In \cite[Section 2]{D**}, it is proven that  the classes of Wilder continua and $D^*$-continua are strictly contained in the class of $D^{**}$-continua.   Since every nondegenerate indecomposable continuum has uncountably many composants \cite[Theorem 11.15]{nadler1}, we can  easily  see that every nondegenerate $D$-continuum is decomposable. For other relationships between the classes of the above  continua and other classes of continua, see \cite[Figure 6]{D}. 

\smallskip

Let $X$ and $Y$ be any compacta. Then, a set-valued function $f: X\to 2^Y$ is called an $upper$ $semi$-$continuous$ $function$ provided that for each $x\in X$ and each open subset $V$ of $Y$ containing $f(x)$, there exists an open subset $U$ of $X$ which contains $x$ such that if $z\in U$, then $f(z)\subset V$. The $graph$ of a set-valued function $f:X\to 2^Y$ is the set $G(f)=\{(x,y)\in X\times Y~|~y\in f(x)\}$. We say that a set-valued function $f:X\to 2^Y$ is $surjective$ if for each $y\in Y$, there exists $x\in X$  such that $y\in f(x)$.

Every map $f:X\to Y$ induces the upper semi-continuous function $\tilde{f}:X\to 2^Y$ defined by $\tilde{f}(x)=\{f(x)\}$ for each $x\in X$. Therefore, if necessary, we may regard a map $f:X\to Y$ as the upper semi-continuous function $\tilde{f}:X\to 2^Y$.

An $inverse$ $sequence$ is a double sequence $\{X_i,f_i\}_{i=1}^{\infty}$ of compacta $X_i$ and upper semi-continuous functions $f_i:X_{i+1}\to 2^{X_i}$. The spaces $X_i$ are called the $factor$ $spaces$ and the upper semi-continuous functions $f_i$ the $bonding$ $functions$. We denote the subspace $\{(x_i)_{i=1}^{\infty}\in \prod_{i=1}^{\infty}X_i~|~$for each $i\geq 1,~x_i\in f_i(x_{i+1})\}$ of $\prod_{i=1}^{\infty}X_i$ by $\underleftarrow{{\rm lim}}\{X_i,f_i\}_{i=1}^{\infty}$. $\underleftarrow{{\rm lim}}\{X_i,f_i\}_{i=1}^{\infty}$ is  also denoted by $X_{\infty}$. $X_{\infty}$ is called the $inverse$ $limit$ $of$ $\{X_i,f_i\}_{i=1}^{\infty}$. 

\section{Being a Wilder continuum is not a  Whitney reversible property}

First, we show the following result which can yield a lot of examples of non-Wilder continua whose all  positive Whitney levels are Wilder continua.

\begin{theorem}
Let $X$ be a nondegenerate continuum and let $\mu: C(X) \to [0,\mu(X)]$ be a Whitney map. Assume there exists a totally disconnected subset  $T$  of $X$ satisfying the following property:

\smallskip

$\bullet$ If $a,b$ and $c$ are pairwise distinct points in $X \setminus T$, then there exists a subcontinuum $L$ of $X$ such that $a \in L$ and  $L$ contains exactly one of $b$ and $c$.

\smallskip

 Then, $\mu^{-1}(t)$ is a Wilder continuum for each $t \in (0,\mu(X))$.

\label{whitneywilder}
\end{theorem}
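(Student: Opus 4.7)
The plan is to fix three distinct continua $A,B,C\in\mu^{-1}(t)$ and construct a subcontinuum $\mathcal{L}$ of $\mu^{-1}(t)$ containing $A$ and exactly one of $B,C$. Two standard families of subcontinua of $\mu^{-1}(t)$ will serve as candidates. First, for any $K\in C(X)$ with $\mu(K)\geq t$, the set $M_K:=\{D\in\mu^{-1}(t):D\subseteq K\}$ is a continuum, as it is the $t$-Whitney level of $C(K)$. Second, for any $F\in C(X)$ with $\mu(F)\leq t$, the ``fiber'' $\mathcal{F}_F:=\{D\in\mu^{-1}(t):F\subseteq D\}$ is also a continuum; this follows from the standard fact that $\{D\in C(X):F\subseteq D\}$ is a continuum and that its restricted Whitney levels are connected.

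Because $\mu(A)=\mu(B)=\mu(C)=t>0$ and the three continua are distinct, none contains another. The key combinatorial fact I would establish is that at least one of the following must hold: (1) $C\not\subseteq A\cup B$; (2) $B\not\subseteq A\cup C$; (3) some component $F$ of $A\cap B$ satisfies $F\not\subseteq C$; (4) some component $F$ of $A\cap C$ satisfies $F\not\subseteq B$. If all four failed, then both $A\cap B$ and $A\cap C$ would be nonempty (else $C\subseteq A\cup B$ or $B\subseteq A\cup C$, together with connectedness, would force one continuum inside another, contradicting distinctness); the failures of (3), (4) would then give $A\cap B\subseteq C$ and $A\cap C\subseteq B$, whence $A\cap B=A\cap C$ and $B\setminus A=C\setminus A$, forcing $B=(A\cap B)\cup(B\setminus A)=(A\cap C)\cup(C\setminus A)=C$, a contradiction.

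In cases (3) and (4) the fiber $\mathcal{F}_F$, with $F$ the witnessing component, is $\mathcal{L}$, with no use of the hypothesis. In case (1) with $A\cap B\ne\emptyset$, the continuum $K:=A\cup B$ satisfies $C\not\subseteq K$, so $M_K$ works; case (2) with $A\cap C\ne\emptyset$ is symmetric. The hypothesis enters only in case (1) with $A\cap B=\emptyset$ (and the symmetric case (2) with $A\cap C=\emptyset$). Here one selects pairwise distinct points $a\in A\setminus T$, $b\in B\setminus(T\cup C)$, $c\in C\setminus(T\cup A\cup B)$; each set is nonempty because any nonempty relatively open subset of a nondegenerate continuum contains a nondegenerate subcontinuum (by boundary bumping) and therefore cannot be contained in the totally disconnected set $T$. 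Applying the hypothesis to $a,b,c$ produces a continuum $L\subseteq X$ with $a\in L$ and exactly one of $b,c$ in $L$. If $b\in L$, then $K:=A\cup L\cup B$ is a continuum with $c\notin K$, so $M_K$ witnesses the Wilder property for $\{A,B\}$; if instead $c\in L$, then $K:=A\cup L\cup C$ is a continuum with $b\notin K$ (using $b\notin C$), so $M_K$ witnesses the property for $\{A,C\}$. The main obstacle is this last case: one must be prepared to switch between the pairs $\{A,B\}$ and $\{A,C\}$ depending on which outcome of the hypothesis occurs, and to verify throughout the non-emptiness and distinctness of the chosen points.
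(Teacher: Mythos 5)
Your proposal is correct, and it amounts to a self-contained reconstruction of the argument that the paper only points to: the paper's proof consists of citing the proof of \cite[Theorem 4.2]{D**} (that being Wilder is a Whitney property) and modifying the single step in which three ``private'' points of the continua $A,B,C$ are chosen, so that those points also avoid $T$ --- exactly the role played by your sets $A\setminus T$, $B\setminus(T\cup C)$, $C\setminus(T\cup A\cup B)$, justified as you do by boundary bumping (a nonempty relatively open subset of a nondegenerate continuum contains a nondegenerate subcontinuum, hence cannot lie inside the totally disconnected set $T$). Your explicit dichotomy (1)--(4) need not coincide verbatim with the case division in \cite{D**}, but the division of labor is the same: the hypothesis on $T$ enters only in the configuration where three pairwise distinct admissible points can be selected, and all other configurations are handled by intrinsic hyperspace continua; your verification that (1)--(4) are exhaustive ($A\cap B=A\cap C$ and $B\setminus A=C\setminus A$ forcing $B=C$) is correct, as is the bookkeeping in each case. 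One point to tighten: your justification that the fiber $\mathcal{F}_F=\{D\in\mu^{-1}(t):F\subseteq D\}$ is a continuum is circular as written (``its restricted Whitney levels are connected'' is precisely the assertion being made). The statement is a standard lemma --- for $F\in C(X)$ with $\mu(F)\le t$, the set $C(F,X)\cap\mu^{-1}(t)$ is a nonempty subcontinuum of $\mu^{-1}(t)$ --- and should be cited rather than re-asserted; it is genuinely needed in your cases (3) and (4), since when $C\subseteq A\cup B$ and $B\subseteq A\cup C$ one has $A\cup B=A\cup C$, so the level sets $M_{A\cup B}=M_{A\cup C}$ contain all three of $A$, $B$, $C$ and cannot do the separating.
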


\begin{proof} The proof is obtained by slightly modifying the proof of  \cite[Theorem 4.2]{D**} as follows:

\smallskip

($\sharp$) In Case 3 of the  proof of  \cite[Theorem 4.2]{D**}, the second author and the third author chose points  $a \in A \setminus (B\cup C)$, $b \in B \setminus (A\cup C)$ and $c \in C \setminus (A\cup B)$. Instead of it, choose points $a \in A \setminus (B\cup C \cup T)$, $b \in B \setminus (A\cup C \cup T)$ and $c \in C \setminus (A\cup B \cup T)$.

\smallskip

Then, we can prove the theorem. \end{proof}

By the following example, we see that the property of being  a Wilder continuum is not a  Whitney reversible property. This result gives a partial answer to \cite[Question 5.2]{D**}. %The example is obtained by modifying \cite[Example 5.6]{rogers} (see also \cite[Example 34.1]{illanes}).

\begin{example} \label{nonwilder}
Let
$$
\begin{array}{ccl}
    X & = & \{(t+|{\rm cos}(\frac{1}{t})|,{\rm sin}(\frac{1}{t}))\in \mathbb{R}^2~|~0<t\leq 1\}\\
      &   & \cup \{({\rm cos}(t),{\rm sin}(t))\in \mathbb{R}^2~|~0\leq t\leq 2\pi\}\\
      &   & \cup \{(-t-|{\rm cos}(\frac{1}{t})|,{\rm sin}(\frac{1}{t}))\in \mathbb{R}^2~|~0<t\leq 1\}\\
      &   & \cup (\{-1-{\rm cos}(1)\}\times [-2,{\rm sin}(1)])\\
      &   & \cup ([-1-{\rm cos}(1),1+{\rm cos}(1)]\times \{-2\})\\
      &   & \cup (\{1+{\rm cos}(1)\}\times [-2,{\rm sin}(1)])\\
\end{array}
$$
(see  Figure \ref{figure1}). $X$ has two arc components. One of them is  a simple closed curve, and the other is homeomorphic to $\mathbb{R}$. Let $x$, $y$ and $z$ be points in $X$ as in  Figure \ref{figure1}. Since there does not exist a subcontinuum of $X$ which contains $x$ and exactly one of $y$ and $z$, $X$ is not Wilder. 

Let $\mu:C(X) \to [0, \mu(X)]$ be a Whitney map. Let $T=\{y,z\}$. Then, it is easy to see that $T$ is totally disconnected. Also, we see that if $a,b$ and $c$ are pairwise distinct points in $X \setminus T$, then there exists a subcontinuum $L$ of $X$ such that $a \in L$ and  $L$ contains exactly one of $b$ and $c$. Hence, by the previous result, we see that $\mu^{-1}(t)$ is a Wilder continuum for each $t \in (0,\mu(X))$.

% Let $S$ be the arc component of $X$ which is a simple closed curve. Let $A=**$, $B=**$, $t_1=\mu(A)$ and $t_2=\mu(B)$. We may assume that $t_1 \le t_2$. We consider two cases. 

%\begin{itemize}
 %   \item[\bf {Case 1.}] $t_1 < t_2$ 

%Note that if $0 < t < t_1$, then $\mu^{-1}(t)$ is as the space represented in Figure \ref{figure2}. Also,  $\mu^{-1}%(t_1)$ is as 
%the space represented in Figure \ref{figure3}. Note that   $\mu^{-1}(t_1)$ is arcwise connected. Hence, by \cite[Cororally 3.4]{orozko}, $\mu^{-1}(t)$ is arcwise connected for all $t \in [t_1, \mu(X))$. Hence, we can see that for each $t \in (0,\mu(X))$, $\mu^{-1}(t)$ is a Wilder continuum.

%\medskip

%\item[\bf {Case 2.}] $t_1 = t_2$ 

%If $0 < t < t_1$, then as in Case 1,  $\mu^{-1}(t)$ is as the space represented in Figure \ref{figure2}. Also,  $\mu^{-1}(t_1)$ is as 
%%the space represented in Figure \ref{figure4}. Note that   $\mu^{-1}(t_1)$ is arcwise connected. Hence, by by \cite[Cororally 3.4]{orozko}, $\mu^{-1}(t)$ is arcwise connected for all $t \in [t_1, \mu(X))$. Hence, we can see that for each $t \in (0,\mu(X))$, $\mu^{-1}(t)$ is a Wilder continuum.
    
%\end{itemize}

%In both cases above, $\mu^{-1}(t)$ is a Wilder continuum for each $t \in (0, mu(X))$. Hence, we see that the property of being a Wilder continuum is not a Whitney reversible property.

\label{wilderexample}
\end{example}

\begin{remark}
In \cite[Example 3.5]{D},  Espinoza and the second author constructed an example of a $D^{*}$-continuum which is not Wilder. Compared to their example, it is easy to see that the continuum $X$ in the previous example has such properties.

\end{remark}

\begin{remark}
Let $X$, $y$, $z$ be as in Example \ref{nonwilder}. Let $c$ be the midpoint of the line segment  $yz$.   Let $n \ge 1$ and let $X_k$ be the subcontinuum of $\mathbb{R}^2$ with $X$ rotated $\frac{k \pi}{2n}$  clockwise around $c$  for each $1 \le k \le n$. Let $Z_0=X \times \{0\} \subseteq \mathbb{R}^3$ and let $Z_k=X_k \times \{\frac{1}{k}\} \subseteq \mathbb{R}^3$ for each $ 1 \le k \le n$. Now, let $S$ be the arc component of $X$ which is a simple closed curve, and for each $0 \le k \le n$, let $S_k= S \times \{\frac{1}{k}\} \subseteq Z_k$. Let  $\mathcal{D}$ be the upper semi-continuous decomposition of $\bigcup _{k=0}^n Z_k$ defined by $\mathcal{D}=\{\{a\} \times \{0, {1}, \frac{1}{2}, \ldots, \frac{1}{n}\} \ | \ a \in S\} \cup \{\{z\} \ | \ z \in \bigcup _{k=0}^n (Z_k \setminus  S_k)\}$. Then, by similar arguments in Example \ref{nonwilder}, we can easily see that the quotient space $X / \mathcal{D}$ is a non-Wilder continuum whose all positive Whitney levels are Wilder continua.

%In Example \ref{nonwilder}, there are only two points that are points of non-Wilderness of $X$. However, by Theorem \ref{whitneywilder}, we can easily see that for each $n \ge 1$, by choosing $2n$-points other than $y$ and $z$ from the arc component, which is a simple closed curve, and adding $n$-arc components  that are homeomorphic to $\mathbb{R}$ to $X$ in an appropriate way, we can construct a non-Wilder continuum, whose points of non-Wilderness consist of $y$, $z$ and the selected points above, and  whose all positive Whitney levels are Wilder continua.   

\end{remark}

\begin{figure}

\includegraphics[scale=0.12]{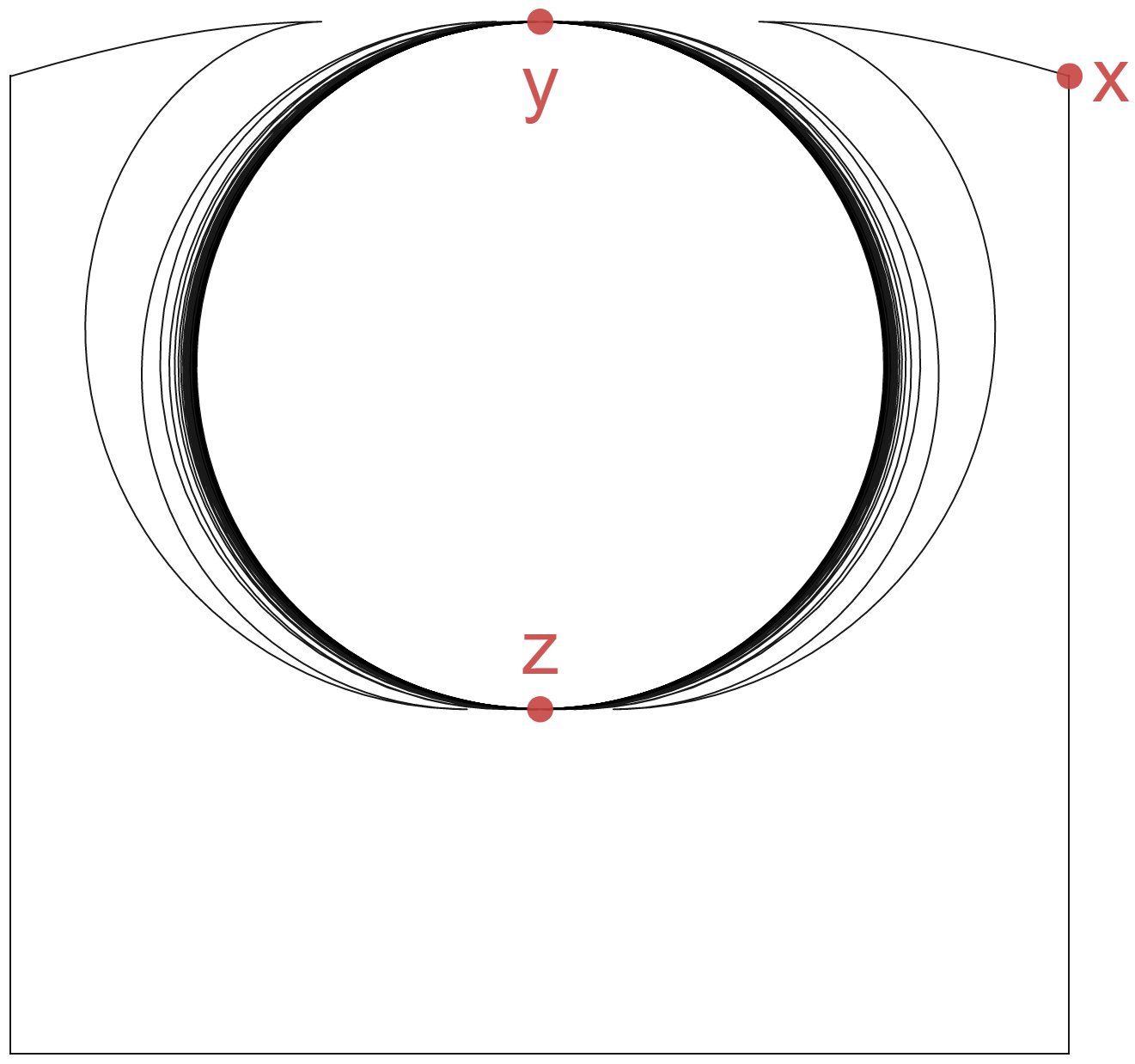}
\caption{The continuum $X$ in Example \ref{wilderexample}.}
\label{figure1}

\end{figure}
%\begin{figure}
%\includegraphics[scale=0.27]{graph4.png}
%\caption{$\mu^{-1}(t)$ for $0 < t < t_1$ in Example %\ref{wilderexample}.}
%\label{figure2}
%\end{figure}

\begin{question}{\rm (See \cite[Question 5.2]{D**}.) }
Is the property of being a $D$-continuum (or a $D^{*}$-continuum, or a  $D^{**}$-continuum) a Whitney reversible property?  

\end{question}

\section{Inverse limits with monotone upper semi-continuous bonding functions}

In this section, we deal with topics on inverse  limits with set-valued bonding functions.

\vspace{1mm}

Note that if a map $f:X  \to Y$ between compacta is monotone, then it satisfies the condition of $f$ in the following definition.

\begin{definition}[{\cite[Definition~3.2]{kelly}}]
 Let $X$ and $Y$ be compacta. Then, an upper semi-continuous function $f:X\to 2^Y$ is said to be {\it monotone} if each of the projection maps $p_X^{G(f)}:G(f)\to X$ and  $p_Y^{G(f)}:G(f)\to Y$ is a monotone map.
\end{definition}

The following results are proven in \cite{oshima}.

%\begin{theorem}{\rm (\cite[Theorem 4.4]{oshima})}
 %Let $\{X_i,f_i\}_{i=1}^{\infty}$ be an inverse sequence of Wilder continua with surjective monotone upper semi-continuous bonding functions. Then, $\underleftarrow{{\rm lim}}\{X_i,f_i\}_{i=1}^{\infty}$ is also a Wilder continuum.
 
 %\label{wildernessoflimit}
%\end{theorem}

\begin{theorem}{\rm (\cite[Example 5.1]{oshima})}
There exists an inverse sequence $\{X_i,f_i\}_{i=1}^{\infty}$ of $D^*$-continua with surjective monotone upper semi-continuous bonding functions such that $\underleftarrow{{\rm lim}}\{X_i,f_i\}_{i=1}^{\infty}$ is not a $D^*$-continuum.
 \label{invd*}
\end{theorem}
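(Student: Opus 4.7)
The plan is to exhibit an explicit inverse sequence $\{X_i, f_i\}_{i=1}^{\infty}$ whose inverse limit contains a $\sin \frac{1}{x}$-type substructure, so that the limit continuum $X_{\infty}$ fails to be $D^{*}$. Since $D^{*}$ implies $D^{**}$ and the $\sin \frac{1}{x}$-continuum already fails $D^{**}$, it suffices to locate two disjoint nondegenerate subcontinua $A, B \subseteq X_{\infty}$ such that every subcontinuum $C$ with $A \cap C \neq \emptyset \neq B \cap C$ necessarily satisfies $A \subseteq C$, thereby forcing $A \setminus C = \emptyset$ and violating clause (ii) of the $D^{*}$ definition.

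First I would take the factor spaces to be simple $D^{*}$-continua; the cleanest choice is $X_i = [0,1]$ for all $i$, since arcs are $D^{*}$ directly from the definition. Next I would design upper semi-continuous bonding functions $f_i \colon X_{i+1} \to 2^{X_i}$ by specifying each graph $G(f_i) \subseteq X_{i+1} \times X_i$ as a ``staircase'' subcontinuum of the square. The monotonicity condition on both projections amounts to requiring every horizontal fiber $f_i^{-1}(y)$ and every vertical fiber $f_i(x)$ to be a connected subarc, which is verified directly from the graph. Surjectivity is arranged by ensuring each graph meets every horizontal and vertical slice. The graphs are calibrated so that, under the compositions $f_1 \circ \cdots \circ f_k$, one family of threads is progressively confined to a specified ``limit arc'' while another family ``spirals'' toward that arc without ever meeting it, the spiral becoming finer as $i \to \infty$.

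The central step is to identify in $X_{\infty}$ a ``limit arc'' $A$ and a disjoint ``spiral arc'' $B$ accumulating on $A$, and to verify that no subcontinuum $C$ meeting both $A$ and $B$ can satisfy $A \setminus C \neq \emptyset$. One argues coordinatewise: the projection of $C$ into each factor $X_i$ is a subcontinuum that must span the whole relevant interval, because any connection between a ``spiral'' coordinate and a ``limit-arc'' coordinate forces inclusion of the entire segment between them in $X_i$. Thus $A \subseteq C$ for every such $C$, and $X_{\infty}$ is not $D^{*}$.

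The principal obstacle is engineering the bonding functions to be monotone upper semi-continuous—a rather rigid restriction, since a monotone upper semi-continuous set-valued function between arcs essentially corresponds to a weakly monotone staircase graph—while still producing the desired accumulation pattern in the inverse limit. The delicate verification is the claim that no small subcontinuum can bridge the spiral arc to the limit arc without engulfing all of $A$; this ultimately rests on showing that, in every sufficiently late factor, the projections of the spiral densely visit the interval underlying $A$, so that any continuum containing points of both families must fill this interval in each coordinate.
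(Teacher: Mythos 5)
The paper does not actually prove this statement---it is imported verbatim from \cite[Example 5.1]{oshima}---so your construction can only be judged on its own terms, and there it breaks down at the level of strategy. The configuration you aim for (disjoint nondegenerate subcontinua $A,B$ of $X_\infty$ such that \emph{every} subcontinuum $C$ meeting both satisfies $A\subseteq C$) is strictly stronger than the failure of $D^*$: applying the definition of a $D^{**}$-continuum to the ordered pair $(B,A)$, such a configuration would force $X_\infty$ not to be $D^{**}$ either. But your factor spaces are arcs, which are $D^{**}$-continua (indeed $D^*$-continua), and Theorem \ref{inverselimitD**} of this very paper shows that an inverse limit of $D^{**}$-continua with surjective monotone upper semi-continuous bonding functions is again a $D^{**}$-continuum. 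Hence no calibration of monotone ``staircase'' graphs can produce the one-sided $\sin\frac{1}{x}$-type accumulation you describe; the plan is impossible, not merely delicate. The obstruction to $D^*$ that can survive monotone inverse limits must be of the symmetric kind: every continuum bridging $A$ and $B$ engulfs at least one of them, but \emph{which} one can be chosen, so that $D^{**}$ holds while $D^*$ fails. This is exactly the behaviour of the continuum in Example \ref{continuumxx}, which is $D^{**}$ but neither Wilder nor $D^*$; a workable example needs two structures each accumulating on something attached to the other, rather than a spiral limiting onto a passive arc.

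There is also a secondary gap in the verification you sketch: showing that $\pi_i(C)$ contains $\pi_i(A)$ for every $i$ does not yield $A\subseteq C$ when the bonding functions are set-valued, since a subcontinuum of $X_\infty$ need not coincide with the inverse limit of its projections. Even after repairing the choice of target configuration, the non-$D^*$ argument has to be run against the actual points of $X_\infty$ (as in the proof of Claim 3 of Theorem \ref{mainex}, where the projection is used only to transfer a hypothetical bridging continuum down to a single factor space and derive a contradiction there), not purely coordinatewise.
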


\begin{theorem}{\rm (\cite[Example 5.4]{oshima})}
There exists an inverse sequence $\{X_i,f_i\}_{i=1}^{\infty}$ of $D$-continua with surjective monotone upper semi-continuous bonding functions such that $\underleftarrow{{\rm lim}}\{X_i,f_i\}_{i=1}^{\infty}$ is not a $D$-continuum.
 \label{invd}
\end{theorem}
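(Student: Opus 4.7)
The goal is to build an inverse sequence $\{X_i,f_i\}_{i=1}^{\infty}$ whose factor spaces are $D$-continua, whose bonding functions are surjective monotone upper semi-continuous, but whose limit $X_{\infty}$ is not a $D$-continuum. The preliminaries already note that every nondegenerate $D$-continuum is decomposable, so a sufficient target for $X_{\infty}$ is a nondegenerate indecomposable continuum; alternatively, one may aim to exhibit explicit disjoint nondegenerate subcontinua $A,B\subseteq X_{\infty}$ such that every subcontinuum of $X_{\infty}$ meeting both $A$ and $B$ contains both.

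The natural first choice is $X_i=[0,1]$, which is certainly a $D$-continuum (it is even Wilder). For the bonding functions, note that by the definition of monotonicity an upper semi-continuous $f:[0,1]\to 2^{[0,1]}$ is monotone precisely when every value $f(x)$ and every fibre $f^{-1}(y)=\{x:y\in f(x)\}$ is a subinterval of $[0,1]$. Geometrically this means the graph $G(f)\subseteq[0,1]^{2}$ is a continuum whose vertical and horizontal slices are connected. A natural family of such graphs consists of simple piecewise-linear ``zigzag'' configurations (for instance, the union of three line segments joined end-to-end in a ``Z'' shape), which realize a one-step fold while remaining compatible with monotonicity in both directions.

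The plan is then: (1) fix a suitable zigzag graph $G\subseteq[0,1]^{2}$ and take $f_i$ to be the upper semi-continuous function with $G(f_i)=G$; (2) check directly from the shape of $G$ that $f_i$ is surjective, upper semi-continuous, and monotone; (3) analyse $X_{\infty}$ using the fact that its subcontinua are inverse limits of subcontinua of the factors along the restrictions of the $f_i$, and exhibit either indecomposability of $X_{\infty}$ or a concrete pair $(A,B)$ witnessing the failure of the $D$ property. Conceptually, step~(3) parallels the classical description of the bucket-handle as the inverse limit of $[0,1]$ under the tent map, with the folding provided this time by a set-valued map whose graph nevertheless has connected horizontal and vertical slices.

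The main obstacle is step~(3). Single-valued folding maps such as the tent map are not monotone, so one cannot simply import the standard construction of an indecomposable inverse limit; the set-valued substitute must fold sufficiently to create composant-like behaviour in $X_{\infty}$ while keeping each preimage $f_i^{-1}(y)$ an interval. Once such a graph is in hand, one either identifies $X_{\infty}$ with a known indecomposable continuum (killing $D$ by the decomposability remark) or produces an explicit bad pair $(A,B)$ by tracking coordinates in $\prod X_i$; this verification mirrors the $D^{*}$-analogue of Theorem~\ref{invd*}, recorded as \cite[Example~5.1]{oshima}.
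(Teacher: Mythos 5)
There is a genuine gap, and in fact the proposed route cannot be repaired as stated. First, the decisive content of the statement is the explicit construction: you must exhibit a concrete graph, verify surjectivity, upper semi-continuity and monotonicity, and then prove that the limit fails the $D$-property. Your proposal defers exactly this (your own ``step (3)'') and even labels it the main obstacle, so what you have written is a plan rather than a proof. (For what it is worth, the paper does not prove this theorem either; it imports it verbatim from \cite[Example 5.4]{oshima}, so there is no internal argument to compare against --- but that makes it all the more necessary for a blind proof to actually carry out the construction.)

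Second, and more seriously, your choice of factor spaces $X_i=[0,1]$ is provably doomed by results stated in this very paper. The arc is a Wilder continuum, hence a $D^{**}$-continuum (the paper records that Wilder continua form a subclass of $D^{**}$-continua), and Theorem \ref{inverselimitD**} asserts that the inverse limit of $D^{**}$-continua under \emph{any} surjective monotone upper semi-continuous bonding functions is again a $D^{**}$-continuum, hence a $D$-continuum; the remark closing Section 4 gives the same conclusion via Wilderness. So no zigzag graph on $[0,1]^2$, however cleverly folded, can produce a non-$D$ limit while keeping all values $f(x)$ and all fibres $f^{-1}(y)$ connected. (Concretely, the simple ``Z'' and tent-like graphs you gesture at always have some disconnected horizontal or vertical slice, which is why they fail monotonicity.) Any correct example must use factor spaces that are $D$-continua but \emph{not} $D^{**}$-continua --- e.g.\ continua with $\sin\frac{1}{x}$-type limit bars, as in the $\sin\frac{1}{x}$-continuum mentioned in Section 2 --- precisely so that Theorem \ref{inverselimitD**} does not apply. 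Your fallback of aiming for an indecomposable limit is likewise unavailable: any such limit of $D$-continua with these bonding functions would contradict the results just cited whenever the factors are $D^{**}$, and in general monotone-type folding of arcs does not create composant behaviour. The proposal therefore misses the key idea of the example being cited.
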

%A $map$ is a continuous single-valued function. Every map $f:X\to Y$ induces the upper semi-continuous function $\tilde{f}:X\to 2^Y$ defined by $\tilde{f}(x)=\{f(x)\}$ for each $x\in X$. Therefore, if necessary, we may regard a map $f:X\to Y$ as the upper semi-continuous function $\tilde{f}:X\to 2^Y$. 

The main aim of this section is to prove Theorem \ref{inverselimitD**}, which is in contrast to the above results. To prove the theorem, we need Theorem \ref{mainoshima} proven in \cite{oshima}.

%The following two definitions are introduced by the third author (\cite{oshima}).

\begin{definition}{\rm (\cite[Definition 3.1]{oshima})}
Let $\mathcal{P}$ be a topological property of continua. Then, we say that  {\it property $\mathcal{P}$ satisfies the  monotonic condition I} if for any continua $X$ and $Y$ with property $\mathcal{P}$ and for  each surjective monotone upper semi-continuous function $f: X \to 2^Y$, $G(f)$ is a continuum having property $\mathcal{P}$.  
\end{definition}

\begin{definition}{\rm (\cite[Definition 3.2]{oshima})}
Let $\mathcal{P}$ be a topological property of continua. Then, we say that   {\it property $\mathcal{P}$ satisfies the  monotonic condition II} if for any inverse sequence $\{X_i,f_i\}_{i=1}^{\infty}$ of continua having property $\mathcal{P}$ with surjective monotone bonding maps, $X_{\infty}$ is a continuum having property $\mathcal{P}$.

\end{definition}

%Also, the following is appeared in \cite{oshima}.

\begin{theorem} {\rm (\cite[Theorem 3.6]{oshima})} Let $\mathcal{P}$ be a topological property of continua. Then, property $\mathcal{P}$ satisfies the monotonic conditions I and II if and only if for each inverse sequence  $\{X_i,f_i\}_{i=1}^{\infty}$  of continua  having property $\mathcal{P}$ with surjective monotone upper semi-continuous functions,  $X_\infty$ is a continuum having  property $\mathcal{P}$.  
\label{mainoshima}
\end{theorem}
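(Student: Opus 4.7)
The plan is to prove the two directions separately; the ``$\Leftarrow$'' direction is quick and the ``$\Rightarrow$'' direction is substantive.

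For ``$\Leftarrow$'', condition II is a direct special case, since every continuous monotone surjection is a monotone upper semi-continuous function under the identification $\tilde{f}$ recalled in Section~2. For condition I, given continua $X$, $Y$ with property $\mathcal{P}$ and a surjective monotone upper semi-continuous $f\colon X\to 2^Y$, I would form the inverse sequence with $X_1=Y$, $X_i=X$ for $i\geq 2$, $f_1=f$, and $f_i=\mathrm{id}_X$ for $i\geq 2$. All factor spaces have $\mathcal{P}$, all bonding functions are surjective monotone upper semi-continuous, and the map $(y,x,x,\ldots)\mapsto (x,y)$ is a homeomorphism $X_\infty\cong G(f)$; the hypothesis then yields $G(f)\in \mathcal{P}$.

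For ``$\Rightarrow$'', given an inverse sequence $\{X_i,f_i\}_{i=1}^{\infty}$ of continua in $\mathcal{P}$ with surjective monotone upper semi-continuous bonding functions, I would introduce the partial inverse limits
\[
Y_n=\bigl\{(x_1,\ldots,x_n)\in \textstyle\prod_{i=1}^{n}X_i : x_i\in f_i(x_{i+1})\text{ for }1\leq i\leq n-1\bigr\},
\]
with $Y_1=X_1$ and $Y_2=G(f_1)$. The core is to show by induction that each $Y_n$ is a continuum with property $\mathcal{P}$ and that the last-coordinate projection $\pi_n\colon Y_n\to X_n$ is a monotone surjection: the base cases $n=1,2$ are the assumption and condition~I applied to $f_1$. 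For the inductive step I would define $h_n\colon X_{n+1}\to 2^{Y_n}$ by $h_n(x_{n+1})=\pi_n^{-1}(f_n(x_{n+1}))$, identify $G(h_n)$ with $Y_{n+1}$ via the natural bijection $(x_{n+1},(x_1,\ldots,x_n))\leftrightarrow (x_1,\ldots,x_{n+1})$, and then apply condition~I with factor spaces $X_{n+1}$ and $Y_n$ to conclude $Y_{n+1}\in\mathcal{P}$.

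The main technical obstacle will be verifying that $h_n$ itself is surjective, monotone, and upper semi-continuous: surjectivity follows from surjectivity of $f_n$; upper semi-continuity is obtained by using the closedness of $\pi_n$ to transfer an open neighbourhood of $h_n(x_{n+1})$ in $Y_n$ into an open neighbourhood of $f_n(x_{n+1})$ in $X_n$ and then invoking upper semi-continuity of $f_n$; and monotonicity of the two projections from $G(h_n)$ reduces to connectedness of $\pi_n^{-1}(f_n(x_{n+1}))$ and of $f_n^{-1}(x_n)$, both of which follow from the inductive monotonicity of $\pi_n$ and the monotonicity of $f_n$ via the standard fact that monotone continuous surjections between continua pull connected sets back to connected sets (this same fact yields the monotonicity of $\pi_{n+1}$, closing the induction). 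Once the induction is complete, the truncation maps $q_n\colon Y_{n+1}\to Y_n$ are continuous monotone surjections (the fibers are essentially copies of $f_n^{-1}(x_n)$, which are connected), and $X_\infty$ is naturally homeomorphic to $\underleftarrow{{\rm lim}}\{Y_n,q_n\}_{n=1}^{\infty}$, so condition~II delivers $X_\infty\in\mathcal{P}$.
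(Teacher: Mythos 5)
This theorem is imported from \cite[Theorem~3.6]{oshima} and the present paper gives no proof of it, so there is nothing internal to compare against; judged on its own, your argument is correct. Both directions check out: the reduction of condition I to the inverse-limit statement via the sequence $Y, X, X, \ldots$ with bonding functions $f, \mathrm{id}, \mathrm{id}, \ldots$ is valid, and in the converse direction the induced function $h_n(x_{n+1})=\pi_n^{-1}(f_n(x_{n+1}))$ is indeed surjective, upper semi-continuous (via closedness of $\pi_n$), and monotone (its two kinds of fibers are $\pi_n^{-1}(f_n(x_{n+1}))$ and $f_n^{-1}(x_n)$, connected by the inductive monotonicity of $\pi_n$ together with the monotonicity of $f_n$), so condition I drives the induction on the partial products $Y_n$ and condition II finishes through the monotone surjective truncations $q_n$ --- the standard route and, as far as one can tell, the one followed in \cite{oshima}.
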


Furthermore, we need the following result. The proof is similar to the proof of \cite[Theorem 4.3]{D}. Hence, we omit the proof. 

%\begin{theorem}{\rm (See \cite[Theorem 15]{mack})} Let $X$ and  $Y$ be continua. Then, for each $a \in Y$ there exist a continuum $Z$ and an atomic map $f:Z \to Y$ such that;

%(i) $f^{-1}(a)$ is homeomorphic to $X$, and  

%(ii) $f|f^{-1}(Y \setminus \{a\}): f^{-1}(Y \setminus \{a\}) \to Y \setminus \{a\}$ is a homeomorphism.
%\label{mack}

%\end{theorem}

\begin{lemma}{\rm (See also \cite[Theorem 4.3]{D})}
Let $\{X_i,f_i\}_{i=1}^{\infty}$ be an inverse sequence of $D^{**}$-continua with surjective monotone maps. Then, $\underleftarrow{{\rm lim}}\{X_i,f_i\}_{i=1}^{\infty}$ is also a $D^{**}$-continuum.
\label{inverselimitd**}
\end{lemma}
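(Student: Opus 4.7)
The plan is to project disjoint nondegenerate subcontinua of $X_\infty := \underleftarrow{{\rm lim}}\{X_i,f_i\}_{i=1}^{\infty}$ down to some factor $X_i$, invoke the $D^{**}$-property there, and pull the resulting witness back through the projection. The underlying tool is the standard fact that an inverse limit with surjective monotone single-valued bonding maps has surjective monotone projections $\pi_i : X_\infty \to X_i$; in particular, $\pi_i^{-1}(K)$ is a subcontinuum of $X_\infty$ for every subcontinuum $K \subseteq X_i$.

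Fix disjoint nondegenerate subcontinua $A$ and $B$ of $X_\infty$. The first step is to verify that for all sufficiently large $i$, the continua $\pi_i(A)$ and $\pi_i(B)$ are nondegenerate and disjoint. Nondegeneracy is immediate: if $a \ne a'$ in $A$ differ in the $j$th coordinate, then $a_i \ne a'_i$ for every $i \ge j$ because the composition $f_j \circ \cdots \circ f_{i-1}$ is single-valued. Disjointness for large $i$ is a routine compactness argument: if it failed for every $i$, choose $a^{(i)} \in A$ and $b^{(i)} \in B$ with $\pi_i(a^{(i)}) = \pi_i(b^{(i)})$, pass to convergent subsequences $a^{(i_k)} \to a \in A$ and $b^{(i_k)} \to b \in B$, and note that for each fixed $j$ one obtains $\pi_j(a) = \pi_j(b)$ by applying $f_j \circ \cdots \circ f_{i_k-1}$ to the equality of $i_k$th coordinates and letting $k \to \infty$. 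Hence $a = b$, contradicting $A \cap B = \emptyset$.

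Fix such an index $i$. By the $D^{**}$-property of $X_i$, there exists a subcontinuum $C_i \subseteq X_i$ with $C_i \cap \pi_i(A) \neq \emptyset \neq C_i \cap \pi_i(B)$ and $\pi_i(B) \setminus C_i \neq \emptyset$. Set $C := \pi_i^{-1}(C_i)$; monotonicity of $\pi_i$ makes $C$ a subcontinuum of $X_\infty$. Taking any $a \in A$ with $\pi_i(a) \in C_i$ shows $A \cap C \neq \emptyset$, and analogously $B \cap C \neq \emptyset$; any $b \in B$ with $\pi_i(b) \in \pi_i(B) \setminus C_i$ lies in $B \setminus C$. I expect the main obstacle to be the bookkeeping needed to ensure that nondegeneracy and disjointness pass to some finite stage $X_i$; once this is in place, the monotonicity of $\pi_i$ reduces the pull-back step to a routine verification, which is precisely why the proof mirrors \cite[Theorem 4.3]{D}.
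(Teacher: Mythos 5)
Your argument is correct and is exactly the standard projection argument that the paper invokes by deferring to the proof of \cite[Theorem 4.3]{D}: project $A$ and $B$ to a factor space where they remain nondegenerate and disjoint, apply the $D^{**}$-property there, and pull the witness back via the monotone surjective projection. No comparison beyond that is needed, since the paper omits the details for precisely this reason.
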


The following theorem is the main result in this section.

\begin{theorem}
Let $\{X_i,f_i\}_{i=1}^{\infty}$ be an inverse sequence of compacta with surjective monotone upper semi-continuous  bonding functions. If $X_i$ is a $D^{**}$-continuum for each $i \ge 1$, then $\underleftarrow{{\rm lim}}\{X_i,f_i\}_{i=1}^{\infty}$ is also a $D^{**}$-continuum.
\label{inverselimitD**}
\end{theorem}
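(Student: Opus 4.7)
The plan is to reduce the statement to Theorem \ref{mainoshima}, which says that a property of continua is preserved under inverse limits with surjective monotone upper semi-continuous bonding functions if and only if it satisfies both monotonic conditions I and II. For the property of being a $D^{**}$-continuum, condition II is exactly the content of Lemma \ref{inverselimitd**}, which is already at our disposal. Hence the entire proof reduces to verifying condition I: whenever $X$ and $Y$ are $D^{**}$-continua and $f:X\to 2^Y$ is a surjective monotone upper semi-continuous function, the graph $G(f)$ is a $D^{**}$-continuum.

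To do this I would use the characterization recorded immediately after the definition of $D^{**}$: a continuum $Z$ is $D^{**}$ if and only if for every $a\in Z$ and every nondegenerate subcontinuum $B\subseteq Z$ with $a\notin B$, there exists a subcontinuum $C\subseteq Z$ with $a\in C$, $B\cap C\neq\emptyset$, and $B\setminus C\neq\emptyset$. Fix such $(a,a')\in G(f)$ and $B$, and let $p_X:G(f)\to X$ and $p_Y:G(f)\to Y$ be the coordinate projections (monotone surjections by hypothesis, so the preimage under either of any subcontinuum of the factor is a subcontinuum of $G(f)$). Write $B_X=p_X(B)$ and $B_Y=p_Y(B)$; since $B$ is nondegenerate, at least one of $B_X,B_Y$ is nondegenerate.

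The argument then splits into cases. If $B_X$ is nondegenerate and $a\notin B_X$, apply the $D^{**}$ property of $X$ to the pair $(a,B_X)$ to produce a subcontinuum $C'\subseteq X$ with the three required properties, and set $C=p_X^{-1}(C')$; the required properties transfer directly. If $B_X$ is degenerate (hence $B_Y$ nondegenerate) and $a'\notin B_Y$, a symmetric argument using the $D^{**}$ property of $Y$ and the projection $p_Y$ yields the witness. The main technical obstacle, which I expect to require the most care, is the case when $a$ already lies in the relevant projection, for instance $a\in B_X$; here the $D^{**}$ property of $X$ cannot be invoked directly. In this situation I would take $C=p_X^{-1}(a)$ itself: it is a subcontinuum of $G(f)$ by monotonicity of $p_X$ and it contains $(a,a')$; it meets $B$ because $a\in B_X$ forces some $(a,y)\in B\cap p_X^{-1}(a)$ (with $y\neq a'$, using $(a,a')\notin B$, which in particular makes $C$ nondegenerate); and it misses a point of $B$ because $B_X$ is nondegenerate, so some $(x,y')\in B$ has $x\neq a$. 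The analogous subcase with $a'\in B_Y$ is handled symmetrically by $p_Y^{-1}(a')$, and these four subcases exhaust all possibilities. This proves condition I, and the theorem follows from Theorem \ref{mainoshima}.
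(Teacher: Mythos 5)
Your proposal is correct and follows essentially the same route as the paper: reduce via Theorem \ref{mainoshima} and Lemma \ref{inverselimitd**} to verifying monotonic condition I for $G(f)$, then split into the same cases (apply the $D^{**}$ property of $X$ or $Y$ to a projected copy of $B$ and pull back, or, when $a$ already lies in $p_X(B)$ with $p_X(B)$ nondegenerate, use the fiber $p_X^{-1}(a)$). The only cosmetic difference is the bookkeeping of the case split, which covers the same situations as the paper's Cases 1, 2.1 and 2.2.
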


\begin{proof}
By Theorem \ref{mainoshima}, it is enough to show that the property of being $D^{**}$ satisfies the monotonic conditions I and II. By Lemma \ref{inverselimitd**}, it is easy to see that  the property of being $D^{**}$ satisfies the monotonic condition II. Hence, we only show that the property of being $D^{**}$ satisfies the monotonic condition I. 

Let $X$ and $Y$ be $D^{**}$-continua and let  $f:X \to 2^Y$ be a surjective monotone upper semi-continuous function.  
First, since $f$ is upper semi-continuous, by Theorem \cite[Theorem~2.1]{ingram-mahavier}, $G(f)$ is compact. Also, by the monotonicity of $f$, $G(f)$ is connected. Hence, $G(f)$ is a continuum.

 Let $(x,y)\in G(f)$ and let  $A$  be a nondegenerate subcontinuum of $G(f)$ such that $(x,y) \notin A$. Put $p_X=p_X^{G(f)}$ and $p_Y=p_Y^{G(f)}$.
 
 \begin{itemize}
  \item[{\bf Case 1}]  $x \notin p_X(A)$ and $y \notin p_Y(A)$ 
  
  Since $A$ is not a one point set, we may assume $p_X(A)$ is not a one point set. Since $X$ is $D^{**}$, there exists a subcontinuum $C \subset  X$ such that $x \in C$, $p_X(A) \cap C \neq \emptyset$ and $p_X(A) \setminus C \neq \emptyset$. Then, $p_X^{-1}(C)$ is a subcontinuum of $G(f)$ such that $(x,y) \in p_X^{-1}(C)$, $p_X^{-1}(C) \cap A \neq \emptyset$ and $A \setminus p_X^{-1}(C) \neq \emptyset$.

  \medskip
  \item[{\bf Case 2.}]$x \in p_X(A)$ or $y \in p_Y(A)$. 
  
  In this case, we may assume $x \in p_X(A)$.
  
  \smallskip
  \begin{itemize}
   \item[{\bf Case 2.1.}] $p_X(A)$ is not a one point set. 
   
    In this case, by using the fact that $f$ is monotone, we see that $p_X^{-1}(x)$ is a subcontinuum of $G(f)$ such that $(x,y) \in p_X^{-1}(x)$, $p_X^{-1}(x) \cap A \neq \emptyset$ and 
    $A \setminus p_X^{-1}(x) \neq \emptyset$. 
   
   %\begin{itemize}
    %\item[{\bf Case 2.1.1}]$y_2=y_3\neq y_1$.
    
    %Then, $p_X^{-1}(x_1)$ is a subcontinuum of $G(f)$ containing $(x_1,y_1)$ and $(x_2,y_2)$ but not $(x_3,y_3)$.
    
    %\item[{\bf Case 2.1.2}]$y_3=y_1\neq y_2$.
    
     %Then, $p_X^{-1}(x_1)$ is a subcontinuum of $G(f)$ containing $(x_1,y_1)$ and $(x_2,y_2)$ but not $(x_3,y_3)$.
 
   %\end{itemize}
   
   \smallskip
   
   \item[{\bf Case 2.2.}]$p_X(A)$ is a  one point set.
  
  In this case, note that $p_Y(A)$ is a nondegenerate subcontinuum of $Y$ and $y \notin p_Y(A)$. Since $Y$ is $D^{**}$, there exists a subcontinuum $D$ of $Y$ such that  $y \in D$, $p_Y(A) \cap D \neq \emptyset$ and $p_X(A) \setminus D \neq \emptyset$. Then, $p_X^{-1}(D)$ is a subcontinuum of $G(f)$ such that $(x,y) \in p_X^{-1}(D)$, $p_X^{-1}(D) \cap A \neq \emptyset$ and $A \setminus p_X^{-1}(D) \neq \emptyset$.

  \end{itemize}
 
 \end{itemize}
 
From the above, we see that $G(f)$ is a $D^{**}$-continuum. Hence, the property of being $D^{**}$ satisfies the monotonic condition I. This completes the proof. \end{proof}

\begin{example}
 Let $A_1$, $A_2$, $A_3$ and $A_4$ be subspaces of $\mathbb{R}^2$ defined by
 $$
 \begin{array}{ccl}
     A_1 & = & \{({\rm cos}(t)+3,{\rm sin}(t)-2)\in \mathbb{R}^2~|~0\leq t\leq 2\pi\},\\
     A_2 & = & \{(t+|{\rm cos}(\frac{1}{t})|+3,-{\rm sin}(\frac{1}{t})-2)\in \mathbb{R}^2~|~0<t\leq 1\}\\
         &   & \cup ([{\rm cos}(1)+4,5]\times \{-{\rm sin}(1)-2\})\\
         &   & \cup \{(-t+6,-{\rm sin}(\frac{1}{t})-2)\in \mathbb{R}^2~|~0<t\leq 1\}\\
         &   & \cup (\{6\}\times [-3,{\rm sin}(\frac{1}{6})+2)),\\
     A_3 & = & \{(6,{\rm sin}(\frac{1}{6})+2)\}, {\rm and}\\
     A_4 & = & \{(t,{\rm sin}(\frac{1}{t})+2)\in \mathbb{R}^2~|~0<t<6\} \cup (\{0\}\times [-{\rm sin}(1)-2,3])\\
         &   & \cup ([0,-{\rm cos}(1)+2]\times \{-{\rm sin}(1)-2\})\\
         &   & \cup \{(-t-|{\rm cos}(\frac{1}{t})|+3,-{\rm sin}(\frac{1}{t})-2)\in \mathbb{R}^2~|~0<t\leq 1\}.
 \end{array}
 $$
 Note that $A_1,~A_2,~A_3,~{\rm and}~A_4$ are pairwise disjoint. Let $X=A_1\cup A_2\cup A_3\cup A_4$ (see figure \ref{continuumxx}). We can see that there is a homeomorphism $h:A_2\cup A_4\to A_2\cup A_4$ such that $h(A_2)=A_4~{\rm and}~h(A_4)=A_2$. 

 \begin{figure}
\includegraphics[scale=0.23]{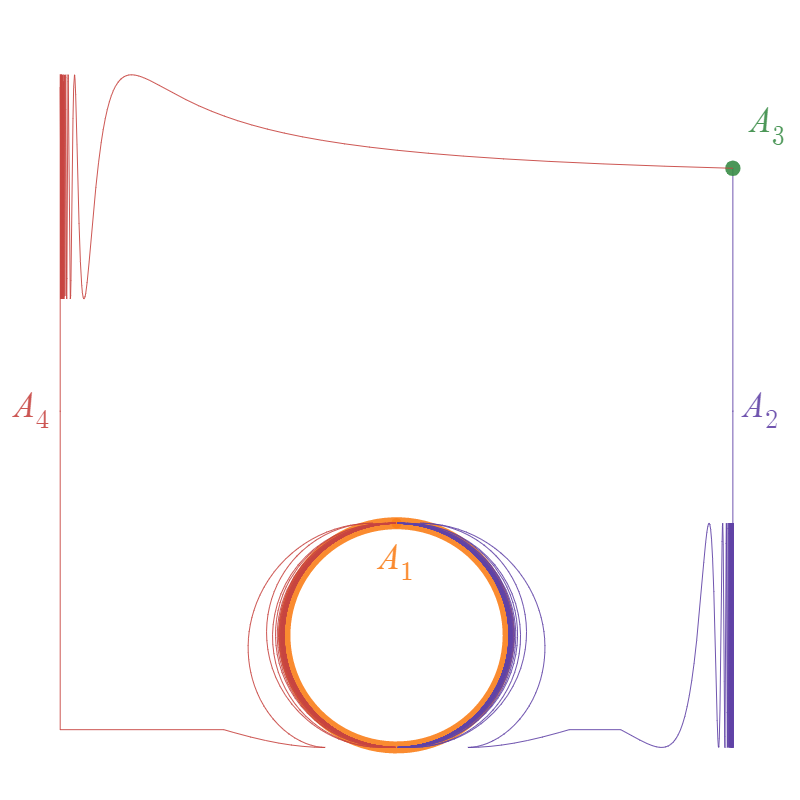}
\caption{The continuum $X$ in Example \ref{continuumxx}}
\label{continuumxx}
\end{figure}

%$X=\{(t,{\rm sin}(\frac{1}{t})+2)\in \mathbb{R}^2~|~0<t\leq  6\}\cup (\{0\}\times [-{\rm sin}(1)-2,3])\cup ([0,-{\rm cos}(1)+2]\times \{-{\rm sin}(1)-2\})\cup \{(-t-|{\rm cos}(\frac{1}{t})|+3,-{\rm sin}(\frac{1}{t})-2)\in \mathbb{R}^2~|~0<t\leq 1\}\cup \{({\rm cos}(t)+3,{\rm sin}(t)-2)\in \mathbb{R}^2~|~0\leq t\leq 2\pi\}\cup \{(t+|{\rm cos}(\frac{1}{t})|+3,-{\rm sin}(\frac{1}{t})-2)\in \mathbb{R}^2~|~0<t\leq 1\}\cup ([{\rm cos}(1)+4,5]\times \{-{\rm sin}(1)-2\})\cup \{(-t+6,-{\rm sin}(\frac{1}{t})-2)\in \mathbb{R}^2~|~0<t\leq 1\}\cup (\{6\}\times [-3,{\rm sin}(\frac{1}{6})+2])$.

Let $f:X\to 2^{X}$ be the upper semi-continuous function defined by
\begin{align}
 f(x)&=
\begin{cases}
~A_3&(x\in A_1),\\
~\{h(x)\}&(x\in A_2),\\
~A_1&(x\in A_3),\\
~\{h(x)\} &(x\in A_4).
\end{cases}\nonumber
\end{align}

It is easy to see that $f$ is a surjective monotone upper semi-continuous function.

For each $i\geq 1$, let $X_i = X$ and let $f_i=f:X_{i+1}\to 2^{X_i}$. By Theorem \ref{inverselimitD**}, the inverse limit $\underleftarrow{{\rm lim}}\{X_i,f_i\}_{i=1}^{\infty}$ is a $D^{**}$-continuum.
\label{continuumxx}
\end{example}

\begin{remark}
 The continuum $X$ in the previous example also appears in \cite[Example 2.3]{D**} as an example of $D^{**}$-continuum which is neither Wilder nor $D^{*}$.   
\end{remark}

\begin{remark}In \cite{oshima}, the third author showed that inverse limits of Wilder continua with surjective monotone upper semi-continuous bonding functions are also Wilder continua. %On the other hand, in the same paper, it is  shown that the property of being $D$ (or $D^*$) of factor is not necessarily  inherited by the inverse limits with surjective monotone upper semi-continuous bonding functions.
\end{remark}

\section{A $D^{**}$-continuum containing neither Wilder continua nor $D^*$-continua}
Let $X$ be a nondegenerate continuum. Then $X$ is said to be {\it arc-like} if for each $\varepsilon > 0$, there exists a surjective map $f : X \to [0,1]$ such that for each $y \in [0,1]$, ${\rm diam} f^{-1}(y) < \varepsilon$. A continnuum is said to be $hereditarily$ $decomposable$ if each of its all subcontinua is decomposable.

\begin{theorem}{\rm (\cite[Remark 5,11]{D})}
There exists an arc-like hereditarily decomosable continuum containing no $D$-continua.    
\end{theorem}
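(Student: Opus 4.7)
The plan is to build the desired continuum $X$ as a carefully chosen inverse limit of arcs and to reduce the required property to combinatorial conditions on the bonding maps. First, I would observe that every nondegenerate arc is itself a $D$-continuum: in $[0,1]$, take $A=[0,1/4]$, $B=[3/4,1]$ and $C=[1/8,1]$; then $C$ is a subcontinuum meeting both $A$ and $B$ while $A\setminus C=[0,1/8)\ne \emptyset$, verifying the $D$-condition. Consequently, any continuum containing no $D$-continuum must be arc-free. The task therefore reduces to producing an arc-like, hereditarily decomposable, arc-free continuum in which, in addition, no nondegenerate subcontinuum satisfies the $D$-condition.

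Next, I would take $X=\underleftarrow{{\rm lim}}\{[0,1],f_n\}_{n=1}^{\infty}$ with each $f_n\colon [0,1]\to[0,1]$ a piecewise linear surjection whose graph has a rapidly growing number of folds with geometrically shrinking amplitudes. Arc-likeness of $X$ is immediate from this representation. The bonding maps have to be calibrated so as to achieve simultaneously (a) hereditary decomposability, which requires that no composition $f_{n}\circ \cdots \circ f_{n+k}$ produce the chaotic behaviour needed to force an indecomposable subcontinuum, and (b) arc-freeness, which requires the folds to be fine enough at every scale that no nondegenerate subcontinuum of $X$ projects homeomorphically onto a subarc of $[0,1]$ at any level. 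Fold-controlled constructions of this kind are standard in the literature on inverse limits of arcs and can be adapted to our setting.

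The main and most delicate step is to verify that no nondegenerate subcontinuum $Y\subseteq X$ is a $D$-continuum. The strategy is a bottleneck argument: using the fold structure of the bonding maps, in every nondegenerate $Y$ one locates a level $n$ at which the projection of $Y$ into $X_n=[0,1]$ passes through a fold of the composition, producing a pair of disjoint nondegenerate subcontinua $A,B\subseteq Y$ sitting on opposite sides of that fold. Any subcontinuum $C\subseteq Y$ meeting both $A$ and $B$ must then project onto an interval containing the fold, which by the fold geometry of the $f_n$ forces $A\cup B\subseteq C$, ruling out the $D$-condition for $Y$. The principal obstacle is the triple balancing act involved in selecting the $f_n$: enough oscillation to ensure both arc-freeness and the bottleneck obstructions in every subcontinuum, yet not so chaotic as to permit an indecomposable subcontinuum anywhere in $X$. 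Achieving this requires a careful quantitative choice of the fold count, amplitudes and positions of the $f_n$, and this is where the main effort of the proof lies.
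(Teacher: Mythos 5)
Note first that the paper itself gives no proof of this statement: it is imported verbatim from Remark~5.11 of \cite{D}, so there is no internal argument to measure yours against. Your opening reduction is correct and is indeed the right starting point: an arc is a $D$-continuum (even a $D^{*}$-continuum), so any continuum containing no $D$-continua must be arcless, and the natural candidates are the Janiszewski-type arcless, arc-like, hereditarily decomposable continua obtained as inverse limits of arcs \cite{jani} --- the same circle of constructions this paper uses in Section~5.

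However, the heart of the theorem --- that \emph{no} nondegenerate subcontinuum satisfies the $D$-property --- is exactly the step your sketch does not deliver. The claimed mechanism, that any subcontinuum $C$ meeting both $A$ and $B$ ``must project onto an interval containing the fold, which by the fold geometry forces $A\cup B\subseteq C$,'' is a non sequitur: knowing that $\pi_n(C)$ covers the fold only says that $C$ reaches across it, not that $C$ contains every point of $Y$ lying over that interval; in general $C$ is far smaller than $\pi_n^{-1}(\pi_n(C))\cap Y$, and folds in the bonding maps by themselves do not obstruct the $D$-property (the pair $A,B$ can always be met by a ``thin'' continuum unless something forces accumulation). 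To defeat the $D$-property for $Y$ you must exhibit disjoint nondegenerate subcontinua $A,B\subseteq Y$ such that \emph{every} subcontinuum meeting both contains both \emph{entirely}; the standard way is to place $A$ and $B$ inside the remainder of a single ray-like set, so that any continuum joining them contains a tail accumulating on, and hence containing, all of $A\cup B$. Producing such a configuration inside every nondegenerate subcontinuum of $X$, while simultaneously keeping $X$ hereditarily decomposable and arcless, is the entire content of the theorem, and your proposal defers precisely this --- together with the actual definition of the bonding maps and the verifications of arclessness and hereditary decomposability --- to an unspecified ``careful quantitative choice.'' As it stands this is a plausible plan, not a proof.
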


    A continuum  $X$ is said to be {\it hereditarily arcwise connected} if each of its subcontinuum is arcwise connected. Also, if each subcontinuum of $X$ is  a Wilder continuum (resp.   a $D$-continuum,  a $D^*$-continuum, a $D^{**}$-continuum), then $X$ is called  {\it a hereditarily Wilder continuum} (resp. {\it  a hereditarily $D$-continuum, a hereditarily $D^*$-continuum, a hereditarily $D^{**}$-continuum}).

\begin{theorem}{\rm (See \cite[Cororally 5,12]{D}) and \cite[Cororally 3.2]{D**})}
There exists an arc-like hereditarily $D$-continuum containing no $D^{**}$-continua.    
\end{theorem}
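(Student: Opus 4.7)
The plan is to combine the two cited corollaries into a single argument. By \cite[Corollary 5.12]{D}, there exists an arc-like hereditarily $D$-continuum $X$; the expected construction is an inverse limit $\underleftarrow{\lim}\{[0,1],f_i\}_{i=1}^{\infty}$ with carefully chosen piecewise-linear bonding maps so that (a) arc-likeness is automatic from the setup, and (b) every nondegenerate subcontinuum of $X$ is forced to be decomposable and in fact $D$ via a level-by-level check on the coordinates.

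Having fixed such an $X$, I would rule out $D^{**}$-subcontinua using \cite[Corollary 3.2]{D**}. That corollary supplies a necessary structural condition on $D^{**}$-subcontinua in this setting (roughly, the existence of a suitable ``crossing'' or ``free'' subcontinuum around every point, analogous to but weaker than the Wilder or $D^{*}$ property). The construction of $X$ is tailored so that every nondegenerate subcontinuum $Y\subseteq X$ admits a pair of disjoint nondegenerate subcontinua $A,B\subseteq Y$ together with a distinguished point $a\in A$ witnessing the failure of the equivalent characterization of $D^{**}$ noted after Definition~2.2: for every subcontinuum $C$ of $Y$ containing $a$ and meeting $B$, one has $B\subseteq C$. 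In the inverse-limit picture this amounts to a coordinate-wise condition that the bonding maps were chosen to enforce.

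The main obstacle is the verification in the second step. Being $D^{**}$ is properly weaker than being $D^{*}$ or Wilder, so one cannot simply argue that ``no Wilder subcontinuum exists'' or ``no $D^{*}$-subcontinuum exists.'' Instead, one must produce, inside every nondegenerate subcontinuum of $X$, a specific pair $(A,B)$ together with a point of non-$D^{**}$-ness and use the constraints of \cite[Corollary 3.2]{D**} on the bonding maps of $X$ to conclude that no subcontinuum $C$ of $Y$ satisfying (i) $A\cap C\neq\emptyset\neq B\cap C$ and (ii) $B\setminus C\neq\emptyset$ can exist. Once this uniform failure of $D^{**}$ is established at every subcontinuum, the combination of hereditary $D$-ness from \cite[Corollary 5.12]{D} and the absence of $D^{**}$-subcontinua yields the theorem, and arc-likeness is inherited from $X$ being arc-like since arc-likeness is a hereditary property of subcontinua.
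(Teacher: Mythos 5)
The paper does not actually prove this statement: it is quoted as a known result, with the proof living entirely in \cite[Corollary 5.12]{D} and \cite[Corollary 3.2]{D**}, so there is no internal argument to compare yours against. Your proposal, however, is not a proof but an outline in which every substantive step is deferred. You never specify the bonding maps, never verify arc-likeness or hereditary $D$-ness for a concrete inverse limit, and never carry out the step you yourself flag as the main obstacle, namely exhibiting inside \emph{every} nondegenerate subcontinuum $Y$ a disjoint pair $(A,B)$ and a point $a\in A$ for which no subcontinuum $C\subseteq Y$ with $a\in C$, $B\cap C\neq\emptyset$ and $B\setminus C\neq\emptyset$ exists. Phrases such as ``the construction of $X$ is tailored so that\dots'' and ``the bonding maps were chosen to enforce'' assert properties of an example without producing an example that has them; as written there is nothing to check.

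A second, independent problem is that you attribute invented content to the cited corollaries. You describe \cite[Corollary 3.2]{D**} as ``a necessary structural condition on $D^{**}$-subcontinua'' involving a ``crossing or free subcontinuum,'' but nothing in this paper supports that reading (the structural statement the paper does quote from \cite{D**} is Corollary 3.4, on hereditary versions of these properties, which is a different result used for a different purpose). Since the theorem is an existence statement whose entire content resides in the external references, a correct treatment is either to cite them as the paper does, or to actually construct the continuum and verify the three properties; your text does neither. Note also that your closing remark about arc-likeness being hereditary, while true for nondegenerate subcontinua, is irrelevant: the statement only requires the ambient continuum to be arc-like.
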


Hence, it is natural to ask whether or not there exists a hereditarily $D^{**}$-continuum containing no $D^*$-continua.    The answer to this question is negative by the following result.

\begin{corollary}{\rm \cite[Corollary 3.4]{D**})}
Let $X$ be a nondegenerate continuum. Then, the following are equivalent. 

\begin{itemize}
    \item[(1)] $X$ is a hereditarily Wilder continuum;
    \item[(2)] $X$ is a hereditarily $D^*$-continuum;
    \item[(3)] $X$ is a hereditarily $D^{**}$-continuum; 
    \item[(4)] $X$ is a hereditarily arcwise connected continuum.
%\item[(3)] $X$ is an inverse limit of arcs with onto bonding maps.
    
\end{itemize}
\label{corhere}
\end{corollary}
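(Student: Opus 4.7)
The plan is to prove the four equivalences through the two chains
\[
(4)\Rightarrow(1)\Rightarrow(3)\Rightarrow(4)\qquad\text{and}\qquad(4)\Rightarrow(2)\Rightarrow(3).
\]
The implications $(1)\Rightarrow(3)$ and $(2)\Rightarrow(3)$ are immediate from the inclusions $\mathrm{Wilder}\subseteq D^{**}$ and $D^{*}\subseteq D^{**}$ recalled in Section~2, applied subcontinuum by subcontinuum.

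For $(4)\Rightarrow(1)$, I would fix a subcontinuum $Y$ and three distinct points $x,y,z\in Y$, and take an arc $\alpha\subseteq Y$ from $x$ to $y$: if $z\notin\alpha$, then $\alpha$ itself witnesses the Wilder property, while if $z\in\alpha$, then $z$ lies strictly between $x$ and $y$ on $\alpha$, and the subarc of $\alpha$ from $x$ to any point strictly between $z$ and $y$ contains $x$ and $z$ but not $y$. For $(4)\Rightarrow(2)$, given disjoint nondegenerate subcontinua $A,B$ of a subcontinuum $Y$, pick an arc $\alpha:[0,1]\to Y$ with $\alpha(0)\in A$ and $\alpha(1)\in B$ and set $t_{0}=\sup\{t:\alpha(t)\in A\}$, $t_{1}=\inf\{t\geq t_{0}:\alpha(t)\in B\}$; closedness and disjointness of $A,B$ force $t_{0}<t_{1}$, and $\alpha([t_{0},t_{1}])$ meets each of $A,B$ in exactly one point, so nondegeneracy of $A$ and $B$ supplies the required $D^{*}$-witness.

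The main obstacle is $(3)\Rightarrow(4)$: producing an arc between any two points $p,q$ of an arbitrary subcontinuum $Y$ from the hereditarily $D^{**}$-hypothesis alone. A first observation is that $X$ is automatically hereditarily decomposable, because every $D^{**}$-continuum is a $D$-continuum and every nondegenerate $D$-continuum is decomposable (as recorded in Section~2). My plan is then to pass to an irreducible subcontinuum $K\subseteq Y$ between $p$ and $q$, which is itself hereditarily $D^{**}$ and hereditarily decomposable, and to apply the classical Kuratowski structure theorem for irreducible hereditarily decomposable continua to obtain a monotone upper semi-continuous decomposition $\pi:K\to[0,1]$ with $\pi(p)=0$ and $\pi(q)=1$, whose tranches $\pi^{-1}(t)$ are subcontinua of $K$. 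It then suffices to show every tranche is a singleton, for then $\pi$ is a homeomorphism and $K$ realises the desired arc in $Y$. The delicate step, and the heart of the proof, is ruling out a nondegenerate tranche $T=\pi^{-1}(t_{0})$: one selects a nondegenerate proper subcontinuum of $T$ together with a point of $T$ outside it, and invokes the $D^{**}$-property of the one-sided pieces $\pi^{-1}([0,t_{0}])$ and $\pi^{-1}([t_{0},1])$ to manufacture a subcontinuum $C\subseteq K$ whose straddling of $T$ contradicts the irreducibility of $K$ between $p$ and $q$. Converting this scheme into a clean contradiction, by controlling precisely how $C$ is allowed to intersect $T$ and the two sides of $K$, is where the genuine technical work lies.
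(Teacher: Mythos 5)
You should first note that the paper does not prove this corollary at all: it is imported verbatim from \cite[Corollary 3.4]{D**} and used as a black box, so there is no in-paper argument to compare yours against. Your treatment of the easy implications is correct and complete: $(1)\Rightarrow(3)$ and $(2)\Rightarrow(3)$ follow from the inclusions recorded in Section~2 applied to each subcontinuum, and your arguments for $(4)\Rightarrow(1)$ (the subarc trick when $z\in\alpha$) and for $(4)\Rightarrow(2)$ (the last-exit/first-entry parameters $t_0<t_1$ producing an arc meeting each of $A$ and $B$ in a single point, so nondegeneracy gives the $D^*$-witness) are sound.

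The genuine gap is $(3)\Rightarrow(4)$, and you have flagged it yourself: the entire content of the corollary lives in the step you defer as ``genuine technical work.'' Reducing to an irreducible hereditarily decomposable subcontinuum $K$ with its canonical monotone map $\pi\colon K\to[0,1]$ is the right frame, but the contradiction you sketch does not come off as described. First, the target is misplaced: a subcontinuum $C$ produced by applying the $D^{**}$-property inside a one-sided piece $\pi^{-1}([0,t_0])$ stays on one side of the tranche $T=\pi^{-1}(t_0)$ and cannot contain both $p$ and $q$, so it cannot contradict irreducibility of $K$; the contradiction one should aim for is that some subcontinuum of $K$ fails to be $D^{**}$, exactly as with the $\sin\frac{1}{x}$-continuum (an irreducible hereditarily decomposable continuum with one nondegenerate tranche, recorded in Section~2 as a non-$D^{**}$-continuum). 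Second, the case analysis is harder than you indicate: since $T$ is nowhere dense, $T=T_L\cup T_R$ where $T_L=T\cap{\rm Cl}_K(\pi^{-1}([0,t_0)))$ and $T_R=T\cap{\rm Cl}_K(\pi^{-1}((t_0,1]))$, and every subcontinuum meeting $T$ together with the right (resp.\ left) side necessarily contains all of $T_R$ (resp.\ $T_L$). When $T=T_R$ (for instance when $t_0=0$) one does get an immediate contradiction, because in $\pi^{-1}([t_0,1])$ every continuum containing $q$ and meeting $T$ must swallow $T$, so that piece is not $D^{**}$; symmetrically when $T=T_L$. But when $T_L$ and $T_R$ are both proper subsets of $T$, the $D^{**}$-witnesses in either one-sided piece genuinely exist and yield no contradiction, and one is forced into a further analysis of the internal structure of the tranche itself. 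Until that case is closed, the implication, and hence the corollary, is not proved by your outline.
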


However, in Theorem \ref{mainex}, we show that there exists a $D^{**}$-continuum containing neither Wilder continua nor $D^{*}$-continua. 
%\begin{definition}
%Let $X$ be a subcontinuum of a continuum $Y$. Then, a point $x \in X$ is $accessible$ if there exists an arc $A \subseteq Y$ such that 
%$A \cap X= \{x\}$.
%\end{definition}
\vspace{1mm}

Let $B$ be the subcontinuum of $\mathbb{R}^2$ defined by
$$
\begin{array}{ccl}
    B & = & (\{\frac{16}{\pi}+1\} \times [-(\sin{(1)}+2),3])\\
      &   & \cup \{(t+|\cos{(\frac{4}{t})}|+\frac{8}{\pi}+1,\sin{(\frac{4}{t})}+2) \in \mathbb{R}^2 \ | \ 0 < t \leq \frac{8}{\pi} \}\\
      &   & \cup \{(\cos{(t)}+\frac{8}{\pi}+1,\sin{(t)}+2) \in \mathbb{R}^2 \ | \ 0 \leq t \leq 2\pi\}\\
      &   & \cup \{(-t-|\cos{(\frac{4}{t})}|+\frac{8}{\pi}+1,\sin{(\frac{4}{t})}+2) \in \mathbb{R}^2 \ | \ 0 < t \leq \frac{8}{\pi}\}\\
      &   & \cup ([-1,1] \times \{3\})\\
      &   & \cup (\{-1\} \times [1,3]) \cup \{(-(t+1),\sin{(\frac{16}{\pi t})}+2) \in \mathbb{R}^2 \ | \ 0 < t \leq \frac{16}{\pi}\}\\
      &   & \cup (\{-(\frac{16}{\pi}+1)\} \times [-3,\sin{(1)}+2])\\
      &   & \cup \{(-(t+|\cos{(\frac{4}{t})}|+\frac{8}{\pi}+1),-(\sin{(\frac{4}{t})}+2)) \in \mathbb{R}^2 \ | \ 0 < t \leq \frac{8}{\pi} \}\\
      &   & \cup \{(\cos{(t)}-(\frac{8}{\pi}+1),\sin{(t)}-2) \in \mathbb{R}^2 \ | \ 0 \leq t \leq 2\pi\}\\
      &   & \cup \{(-(-t-|\cos{(\frac{4}{t})}|+\frac{8}{\pi}+1),-(\sin{(\frac{4}{t})}+2)) \in \mathbb{R}^2 \ | \ 0 < t \leq \frac{8}{\pi}\}\\
      &   & \cup ([-1,1] \times \{-3\})\\
      &   & \cup (\{1\} \times [-3,-1]) \cup \{(t+1,-(\sin{(\frac{16}{\pi t})}+2)) \in \mathbb{R}^2 \ | \ 0 < t \leq \frac{16}{\pi}\}
\end{array}
$$
(see Figure \ref{continuumx}). Let $P$ and $Q$ be the subcontinua of $B$ defined by
$$
\begin{array}{ccl}
    P & = & (\{\frac{16}{\pi}+1\} \times [0,3])\\
      &   & \cup \{(t+|\cos{(\frac{4}{t})}|+\frac{8}{\pi}+1,\sin{(\frac{4}{t})}+2) \in \mathbb{R}^2 \ | \ 0 < t \leq \frac{8}{\pi} \}\\
      &   & \cup \{(\cos{(t)}+\frac{8}{\pi}+1,\sin{(t)}+2) \in \mathbb{R}^2 \ | \ 0 \leq t \leq 2\pi\}\\
      &   & \cup \{(-t-|\cos{(\frac{4}{t})}|+\frac{8}{\pi}+1,\sin{(\frac{4}{t})}+2) \in \mathbb{R}^2 \ | \ 0 < t \leq \frac{8}{\pi}\}\\
      &   & \cup ([-1,1] \times \{3\})\\
      &   & \cup (\{-1\} \times [1,3]) \cup \{(-(t+1),\sin{(\frac{16}{\pi t})}+2) \in \mathbb{R}^2 \ | \ 0 < t \leq \frac{16}{\pi}\}\\
      &   & \cup (\{-(\frac{16}{\pi}+1)\} \times [0,\sin{(1)}+2]),\\
      
    Q & = & (\{-(\frac{16}{\pi}+1)\} \times [-3,0])\\
      &   & \cup \{(-(t+|\cos{(\frac{4}{t})}|+\frac{8}{\pi}+1),-(\sin{(\frac{4}{t})}+2)) \in \mathbb{R}^2 \ | \ 0 < t \leq \frac{8}{\pi} \}\\
      &   & \cup \{(\cos{(t)}-(\frac{8}{\pi}+1),\sin{(t)}-2) \in \mathbb{R}^2 \ | \ 0 \leq t \leq 2\pi\}\\
      &   & \cup \{(-(-t-|\cos{(\frac{4}{t})}|+\frac{8}{\pi}+1),-(\sin{(\frac{4}{t})}+2)) \in \mathbb{R}^2 \ | \ 0 < t \leq \frac{8}{\pi}\}\\
      &   & \cup ([-1,1] \times \{-3\})\\
      &   & \cup (\{1\} \times [-3,-1]) \cup \{(t+1,-(\sin{(\frac{16}{\pi t})}+2)) \in \mathbb{R}^2 \ | \ 0 < t \leq \frac{16}{\pi}\}\\
      &   & \cup (\{\frac{16}{\pi}+1\} \times [-(\sin{(1)}+2),0]).
\end{array}
$$
Also, let $R^{0}$, $Q^{0}$,
$R^{1}$ and $Q^{1}$ be the subspaces of $B$ defined by 
$$
\begin{array}{ccl}
    R^{0} & = & \{(-(t+1),\sin{(\frac{16}{\pi t})}+2) \in \mathbb{R}^2 \ | \ 0 < t \leq \frac{16}{\pi}\},\\
    
    Q^{0} & = & \{(-t-|\cos{(\frac{4}{t})}|+\frac{8}{\pi}+1,\sin{(\frac{4}{t})}+2) \in \mathbb{R}^2 \ | \ 0 < t \leq \frac{8}{\pi}\},\\
    
    R^{1} & = & \{(t+1,-(\sin{(\frac{16}{\pi t})}+2)) \in \mathbb{R}^2 \ | \ 0 < t \leq \frac{16}{\pi}\},\\

    Q^{1} & = & \{(t+|\cos{(\frac{4}{t})}|-\frac{8}{\pi}-1,-(\sin{(\frac{4}{t})}+2)) \in \mathbb{R}^2 \ | \ 0 < t \leq \frac{8}{\pi}\}.
\end{array}
$$
  Then, $R^{0}$, $Q^{0}$,
$R^{1}$ and $Q^{1}$ are homeomorphic to $[0,1)$ respectively. Also, note that ${\rm Cl}_B R^{0}$, ${\rm Cl}_B Q^{0}$, ${\rm Cl}_B R^{1}$ and ${\rm Cl}_B Q^{1}$ are homeomorphic to the $\sin \frac{1}{x}$-continuum.  Furthermore, let $a$, $b$, $p$, $p'$, $q$, $q'$, $s$, $s'$, $t$, $t'$, $u$, $u'$, $v$, $v'\in B$ be the points as in Figure \ref{continuumx}. Finally, let $S(B)=\{a$, $b$, $p$, $p'$, $q$, $q'$, $s$, $s'$, $t$, $t'$, $u$, $u'$, $v$, $v'\}$.

\smallskip

 In the following theorem, if $\{X_i, f_i\}_{i = 0}^{\infty}$ is an inverse sequence with single valued bonding maps, then we denote the map $f_i \circ \cdots \circ f_{j-1}: X_j \to {X_i}$ by $f_{i,j
 }$ if $j > i +1$.  Also, let $f_{i,i+1}=f_i$.  Furthermore, for each $n \ge 0$, let  $\pi_n : X_\infty \to X_n$ be the projection.  Finally, if $f:X \to Y$ is a map and $y \in Y$ is a point such that $f^{-1}(y)$ is a one point set, then we also denote the point in $f^{-1}(y)$ by $f^{-1}(y)$.

\begin{theorem} \label{mainex}  \label{mainex} 
There exists a $D^{**}$-continuum containing neither Wilder continua nor $D^{*}$-continua.
\end{theorem}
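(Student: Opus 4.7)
The plan is to construct $X_\infty$ as the inverse limit of an inverse sequence $\{X_i, f_i\}_{i=1}^\infty$ with each $X_i = B$ and each bonding map $f_i : X_{i+1} \to 2^{X_i}$ a surjective monotone upper semi-continuous function, engineered so that the $\sin(1/x)$-type structure carried by the closures ${\rm Cl}_B R^{0}$, ${\rm Cl}_B Q^{0}$, ${\rm Cl}_B R^{1}$, ${\rm Cl}_B Q^{1}$ is reproduced at every level of the inverse system. Intuitively, each $f_i$ should ``stretch'' short arcs of $X_i$ across all of $X_{i+1}$, using the distinguished set $S(B)=\{a,b,p,p',q,q',s,s',t,t',u,u',v,v'\}$ to prescribe how endpoints and junction points are identified; monotonicity, in the sense of Definition 4.3 in the excerpt, is then arranged by collapsing the ``opposite'' hemisphere of $B$ at appropriate places so that each projection map on $G(f_i)$ has connected fibers.

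The $D^{**}$ property of $X_\infty$ would then be immediate from Theorem \ref{inverselimitD**}: the continuum $B$ is itself a $D^{**}$-continuum (this is essentially \cite[Example 2.3]{D**}, which is the reference behind the remark in Section 4), each $f_i$ is surjective monotone upper semi-continuous, and so the inverse limit is $D^{**}$.

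The core of the proof is then to show that no nondegenerate subcontinuum $K$ of $X_\infty$ is Wilder or $D^{*}$. The strategy is to fix a nondegenerate $K \subseteq X_\infty$, choose $n$ large enough that $\pi_n(K) \subseteq X_n = B$ is nondegenerate, and then, thanks to the design of the bonding maps, locate inside $\pi_n(K)$ a copy of some ${\rm Cl}_B R^{j}$ or ${\rm Cl}_B Q^{j}$, i.e., a $\sin(1/x)$-continuum consisting of a limit segment $L$ and an approaching ray. Using the monotonicity of the $f_i$ (and hence of the composites $f_{i,j}$), one lifts this picture into $K$ itself: the limit segment lifts to a nondegenerate subcontinuum $\widetilde L \subseteq K$, and the ray lifts to a nondegenerate arc $\alpha \subseteq K$ disjoint from $\widetilde L$. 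Picking any two distinct $y,z \in \widetilde L$ and any $x \in \alpha$, any subcontinuum of $K$ containing $x$ and meeting $\{y,z\}$ must contain all of $\widetilde L$, and therefore both $y$ and $z$; this witnesses non-Wilderness of $K$. Taking $\widetilde L$ and a nondegenerate subarc of $\alpha$ as a disjoint pair of subcontinua gives the analogous witness against the $D^{*}$ property, since any subcontinuum of $K$ meeting both still contains $\widetilde L$ entirely.

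The hard part will be the lifting step: ensuring that the $\sin(1/x)$-structure visible inside $\pi_n(K)$ actually lives inside $K$, not merely inside $\pi_n^{-1}(\pi_n(K))$. This is where the precise definitions of the bonding maps on the rays $R^{0}, Q^{0}, R^{1}, Q^{1}$ and on the points of $S(B)$ matter most; one has to arrange $f_i$ so that preimages of rays are again rays together with their limit segments, so that a tail of $\alpha$ lifts into $K$ and a subcontinuum of $\widetilde L$ does as well. A second subtlety is ruling out the case in which $\pi_n(K)$, although nondegenerate, misses every ray closure at level $n$; the design of the bonding maps must force this to fail eventually as $n$ grows, which is the combinatorial heart of the construction.
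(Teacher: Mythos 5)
Your proposal is a blueprint rather than a proof: the two steps you yourself flag as ``the hard part'' and ``the combinatorial heart of the construction'' are precisely the mathematical content of the theorem, and neither is supplied. Concretely: (a) you never define the bonding functions, and with a stationary inverse system ($X_i=B$ for all $i$) it is unclear that any monotone upper semi-continuous $f_i$ can force every nondegenerate subcontinuum $K$ of $X_\infty$ to eventually meet a ray closure --- since $\pi_i(K)$ is contained in the union of the values of $f_i$ over $\pi_{i+1}(K)$, a $K$ whose projections stay inside one small arc of $B$ at every level would simply be an arc, hence both Wilder and $D^{*}$, and your construction must rule this out; (b) even granting that a $\sin\frac1x$-configuration $\widetilde L\cup\alpha$ lifts into $K$, your non-Wilder and non-$D^{*}$ arguments require that the ray $\alpha$ be the \emph{only} route inside $K$ from $x$ to $\widetilde L$; a subcontinuum of $K$ could a priori reach an interior point of $\widetilde L$ through some other part of $K$ and then need not contain all of $\widetilde L$. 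You also assert that $B$ is $D^{**}$ ``essentially by Example 2.3 of the cited paper,'' but the continuum $B$ defined before the theorem is not that example and its $D^{**}$-ness is not free.

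The paper resolves all of this with a Janiszewski/Hagopian--Oversteegen blow-up scheme rather than a stationary system: the factor spaces form an increasing sequence $X_0,X_1,\dots$ obtained by successively blowing up a dense sequence of points $d_n$ to copies of $B$, with single-valued monotone bonding maps collapsing each inserted copy to the blown-up point. Density of the $\{d_n\}$ guarantees that for every nondegenerate subcontinuum $D$ there are levels $n_1<n_2$ at which $\pi_{n_2+1}(D)$ must contain a connected set joining the two attachment points of an inserted copy of $B$, hence one of the bad configurations; the controlled topology of $B$ (circles with rays spiraling onto them and the distinguished set $S(B)$, together with condition (iii) keeping the $d_n$ off $S(X_n)$) then yields genuine points of non-Wilderness and a disjoint pair of limit continua $R',Q'$ that no subcontinuum can meet while missing part of each, killing $D^{*}$. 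The $D^{**}$ part is likewise not a one-line appeal to the inverse-limit theorem: each $X_n$ is shown to be $D^{**}$ by an induction over the blow-ups (the Subclaim), and only then is Lemma \ref{inverselimitd**} applied. Your skeleton points in the right direction, but the construction that makes it stand is missing, and closing gap (a) within your stationary-factor-space variant would require essentially reinventing the blow-up mechanism.
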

    
\begin{proof}

We follow the scheme from  \cite[Section 3]{hago3}, which is originally based on the idea of the construction of Janiszewski's hereditarily decomposable, arc-like, and arcless continuum \cite{jani}. In particular, almost all notations and terminology in the proof are taken from \cite[Section 3]{hago3}. In order to avoid complications, we will omit the parts of the way of construction of the required continuum that overlap with the construction of the planar arcless continuum-chainable continuum that appeared in \cite{hago3}. There are three differences in construction between the continuum in \cite[Section 3]{hago3} and ours. Two of them are as follows:

\smallskip
(i) Instead of $B$ in \cite[Section 3]{hago3}, in our example, 
we let $B$ be the continuum defined before Theorem \ref{mainex}.

\smallskip

(ii) Let  $X_0= \{(x,0) \in \mathbb{R}^2\ |  \ -1 \le x \le 1\}$ and $d_0=(0,0)$. Also, let $X_1 = B \cup  \{(x,0) \in \mathbb{R}^2\ |  \  -\frac{16}{\pi}-2  \le x \le -\frac{16}{\pi}-1  \ {\rm or } \ \frac{16}{\pi}+1 \le x \le \frac{16}{\pi}+2\}$ and let  $f_0^{1} : X_1 \to X_0$ be the map defined by

$$f_0^1(x,y)=\left\{
\begin{array}{l}
(x+\frac{16}{\pi}+1,y) \hspace{1pc} (x \le -\frac{16}{\pi}-1  ), \\
(0,0) \hspace{1pc} (-\frac{16}{\pi}-1 \le x \le \frac{16}{\pi}+1), \\
(x-\frac{16}{\pi}-1,y) \hspace{0.8pc} (x \ge \frac{16}{\pi}+1)).
\end{array}
\right.$$

%X_1=???$  and define $f_0^1 : X_1 \to X_0(=[-1,2] \times \{0\})$ by 

%$$f_0^1(x,y)=\left\{
%\begin{array}{l}
%(x,0) \hspace{1pc} (x \le 0), \\
%(0,0) \hspace{1pc} ( 0 \le x \le \frac{2}{\pi}), \\
%(\frac{\pi x-2}{\pi-1},0) \hspace{0.8pc} (x \ge \frac{2}{\pi}).
%\end{array}
%\right.$$ 

%\smallskip

\smallskip

See Figure \ref{???}. 
 We  say that $X_1$ is obtained from $X_0$
by blowing up the point $d_0 = (0,0) \in X_0$   to a copy of $B$. Also, following the scheme in  \cite[Section 3]{hago3}, we continue blowing up points of an appropriately chosen sequence $\{d_n\}_{n=0}^\infty$ to copies of $B$, and for each $n \ge 0,$ we can obtain $X_{n+1}$ from $X_n$ and $f_n:X_{n+1} \to X_n$. For each $n \ge 1$, let $B_n=f_{n-1}^{-1}(d_{n-1})$ and let $P_n$, $Q_n$,  $R_n^{0}$,  $Q_n^0$, $R_n^{0}$,   $R_n^1$,   $a_n$,  $b_n$, $p_n$, $p'_n$,  $q_n$,  $q'_n$,  $s_n$,  $s_n'$, $t_n$,  $t_n'$, $u_n$,  $u_n'$,  $v_n$,  $v_n'$ be subsets and points in $B_n$ corresponding to $P$, $Q$,  $R^{0}$,   $Q^0$,  $R^{1}$, $Q^1$,  $a$,  $b$,   $p$, $p'$, $q$,   $q'$,  $s$,  $s'$,  $t$,  $t'$,   $u$,  $u'$,  $v$,  $v'$ in $B$ respectively. Let $S(X_0)= E(X_0)$. Also, for each $n \ge 1$, 
let $$S(X_n)=f_{0,n}^{-1}(S(X_0)) \cup (\bigcup_{k=1}^n f_{k,n}^{-1}(\{ a_k , b_k, p_k, p'_k,  q_k,  q'_k,  s_k,  s_k', t_k, t_k',  u_k,  u_k',  v_k,  v_k'\}))$$.

We may assume  the following condition (this is the third difference in construction between the planar arcless continuum-chainable continuum that appeared in \cite[Section 3]{hago3}) and ours):

\smallskip

(iii) For each $n \ge 1$, $d_n \in X_n \setminus S(X_n)$.

\smallskip
Let $X_\infty= \underleftarrow{{\rm lim}}\{X_n,f_n\}_{n=0}^{\infty}$. We show that $X_\infty$ has the required property. 

\smallskip

{\bf Claim 1.} $X_\infty$ is a $D^{**}$-continuum.

Proof of Claim 1. To prove Claim 1, first, we prove that $X_n$ is a $D^{**}$-continuum for each $n \ge 0$. To prove it , we need to prove the following subclaim.

\smallskip

{\bf Subclaim.} Let $n \ge 0$. Let $z \in X_n \setminus S(X_n)$ and $A$ be a subarc of $X_n$ such that $A \neq X_n$ and $z \in A \setminus E(A)$. Then, for each $x \in X_{n} \setminus A$, there exists a subcontinuum $C$ of $X_n$ such that $x \in C, \ A \cap C \neq \emptyset$ and $z \notin C$.     

\smallskip

Before the proof of Subclaim, note that the following holds:

\smallskip

$(\star)$ Let $b \in B \setminus S(B)$ and let $L$ be a subarc of $B$ such that  $b \in L \setminus E(L)$. Then, for each $y \in B \setminus L$, there exists a subcontinuum $D$ of $B$ such that $y \in D, \ L \cap D \neq \emptyset$ and $b \notin D$.   

\smallskip

Proof of Subclaim.  We prove Subclaim  by induction on $n$. If $n=0$, then we can easily see the assertion holds. Let $k \ge 1$ and assume that the assertion holds for $ n \le k-1$.   Now, we prove that the assertion holds for $n=k$.

\smallskip

{\bf Case A.} $z, x \notin B_k$. 

{\bf Case B.} $z \notin B_k, \ x \in B_k$.

In the cases above, by using inductive assumption, we can easily find a subcontinuum $C$ of $X$ with the required properties.  

\smallskip

{\bf Case C.} $z, x \in B_k$. 

In this case, by ($\star$) it is easy to see that there is a subcontinuum $C$ of $X$ with the required properties. 

\smallskip

{\bf Case D.} $z \in B_k$, $x \notin B_k$ . 

In this case, take a sufficiently small subarc $J$ of $X_{k-1}$ such that $d_{k-1} (=f_{k-1}(B_k)) \in J \setminus E(J)$ and $f_{k-1}(x) \notin J$. Then, by inductive assumption, there is a subcontinuum $C'$ of $X_{k-1}$ such that $f_{k-1}(x) \in C', \ J \cap C' \neq \emptyset$ and $d_{k-1} \notin C'$. Take $c \in  J \cap C'$. Then, we can take a subarc $P$ of $X_{k}$ from $f_{k-1}^{-1}(c)$ to the one of $a_k$ and $b_k$, and $(P \setminus E(P)) \cap B_k = \emptyset$. We may assume $a_k \in P$. Let $L'$ be a subarc of $B_k$ such that $z \in L' \setminus E(L') \subseteq L' \subseteq A \setminus E(A)$ and $a_k \notin L'$. Then, by $(\star)$, there exists a subcontinuum $D'$ of $B_k$ such that $a_k \in D'$, $L' \cap D' \neq \emptyset$ and $z \notin D'$. Let  $C=f_{k-1}^{-1}(C') \cup P \cup D'$. Then, it is easy to see that $C$ has the required properties.

\smallskip

Hence, in every case we can find a subcontinuum $C$ of $X_k$ with the required properties. This completes the proof of Subclaim.

\smallskip

Now, we prove $X_n$ is a $D^{**}$-continuum for each $n \ge 0$. %We prove it by induction on $n$. First, it is clear that $X_0$ is a $D^{**}$-continuum.  Let $k \ge 1$ and assume that the assertion holds for $ n \le k-1$.   Now, we prove that the assertion holds for $n=k$. 
Let $F \subsetneq X_n$ be a nondegenerate subcontinuum and $x \in X_n \setminus F$. Take any subarc $A$ of $F$ and any $z \in A \setminus (E(A) \cup S(X_n))$. By Subclaim, there exists a subcontinuum $C$ of $X_n$ such that $x \in C, \ A \cap C \neq \emptyset$ and $z \notin C$. Note that $F \setminus C \neq \emptyset$. Hence, we see that $X_n$ is a $D^{**}$-continuum.

Therefore, $X_\infty$ is an inverse limit of $D^{**}$-continua with surjective monotone bonding maps. Hence, by Lemma \ref{inverselimitd**}, $X_\infty$ is a $D^{**}$-continuum. This completes the proof of Claim 1. 

\smallskip

\begin{figure}
\includegraphics[scale=0.25]{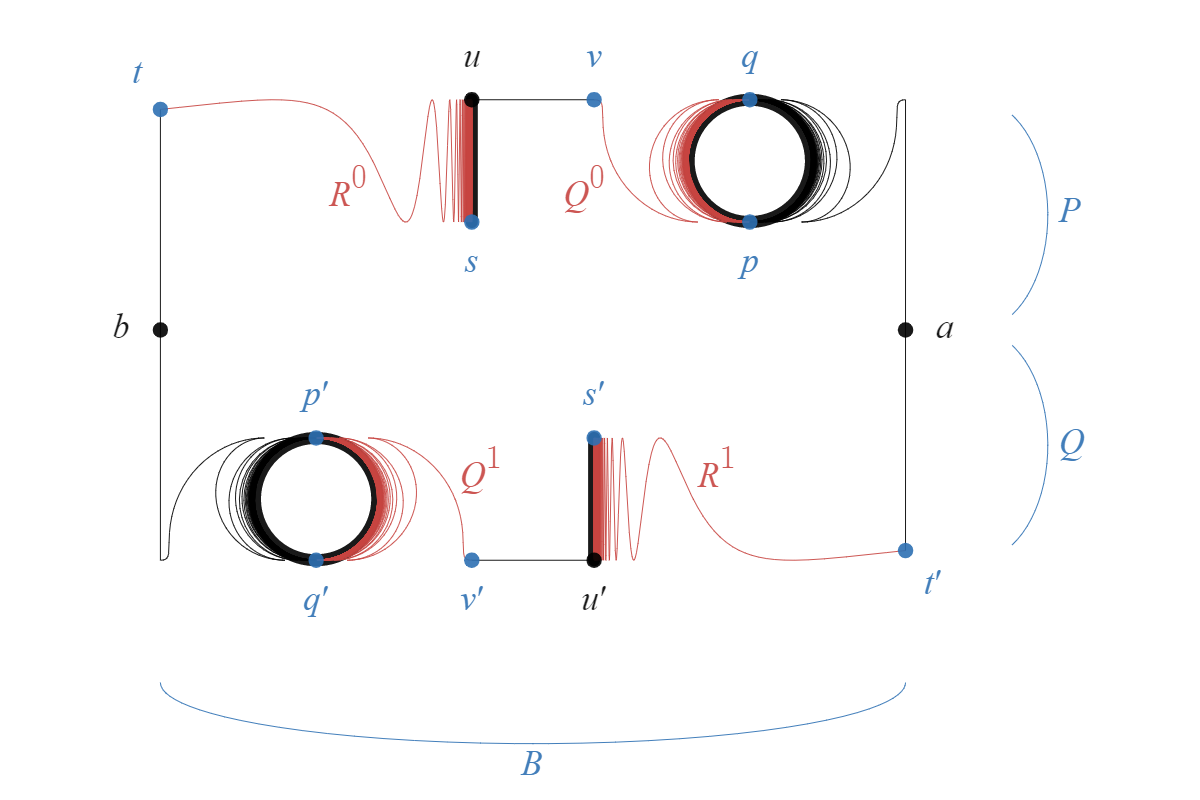}
\caption{The continuum $B$}
\label{continuumx}
\end{figure}

\begin{figure}
\includegraphics[scale=0.28]{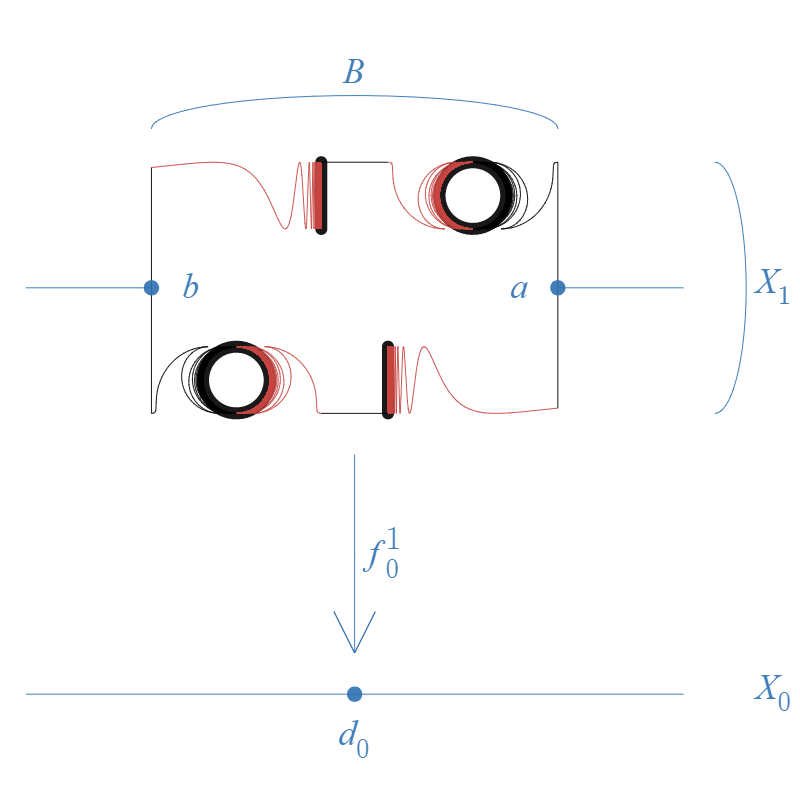}
\caption{$X_0,\ X_1 \ {\rm and} \ f_0^1.$}
\label{???}
\end{figure}

{\bf Claim 2.} $X_{\infty}$ contains no Wilder continua. 

Proof of Claim 2. Let $D$ be a nondegenerate subcontinuum of $X_{\infty}$. Then, there exists $n_0 \ge 0$ such that $\pi_{n_0}(D)$ is not a singleton. Take any subarc $A$ of $\pi_{n_0}(D) \setminus S(X_{n_0})$.  Let $n_1 = \min \{ n  \ | \ n \ge n_0$ and  $d_n \in f_{n_0,n}^{-1} (A \setminus E(A)) \}$ and $n_2 = \min \{ n  \ | \ n > n_1     \ {\rm and} \    d_n \in f_{n_1,n}^{-1}(f_{n_0,n_1}^{-1}(A \setminus E(A)) \setminus \{d_{n_1}\})  \}$. Note that $f_{n_1+1, n_2+1}^{-1}(a_{n_1+1})$, $f_{n_1+1, n_2+1}^{-1}(b_{n_1+1})$, $a_{n_2+1}$, $b_{n_2+1} \in \pi_{n_2+1}(D)$. Since  $\pi_{n_2+1}(D)$ is connected, the one of the following holds:

\smallskip

(i) There is a subcontinuum $T$ of $f_{n_1, n_2+1}^{-1}(d_{n_1})$ containing  $f_{n_1+1, n_2+1}^{-1}(a_{n_1+1})$ and  $f_{n_1+1, n_2+1}^{-1}(b_{n_1+1})$.

\vspace{1mm}

(ii) There is a subcontinuum of $T' $ of $f_{n_2}^{-1}(d_{n_2})$ containing  $a_{n_2+1}$ and  $b_{n_2+1}$.

\smallskip 

Assume (i) holds. Then, it is easy to see that either $\{p_{n_1+1}, q_{n_1+1}\} \subseteq f_{n_1+1, n_2+1}(T)$ or $\{p'_{n_1+1}, q'_{n_1+1}\} \subseteq f_{n_1+1, n_2+1}(T)$ holds. Hence, we may assume $\{p_{n_1+1}, q_{n_1+1}\} \subseteq f_{n_1+1, n_2+1}(T)$. In this case, we can easily see that $\pi_{n_1+1}^{-1}(p_{n_1+1})$ and $\pi_{n_1+1}^{-1}(q_{n_1+1})$ are points of non-Wilderness of $D$. Hence, if (i) holds, then $D$ is not a Wilder continuum. We also see that $D$ is not a Wilder continuum if (ii) holds. This completes the proof of Claim 2.

\vspace{2mm}

{\bf Claim 3.}  $X_{\infty}$ contains no $D^{*}$-continua.

Proof of Claim 3. Let $D, \  n_0, \ n_1, \ n_2$ be as in the proof of Claim 2.  Then, as in the proof of Claim 2,  the one of the following holds:

\vspace{1mm}

(i) There is a subcontinuum $T$ of $f_{n_1, n_2+1}^{-1}(d_{n_1})$ containing  $f_{n_1+1, n_2+1}^{-1}(a_{n_1+1})$ and  $f_{n_1+1, n_2+1}^{-1}(b_{n_1+1})$.

\vspace{1mm}

(ii) There is a subcontinuum of $T' $ of $f_{n_2}^{-1}(d_{n_2})$ containing  $a_{n_2+1}$ and  $b_{n_2+1}$.

\vspace{1mm}

Assume (i) holds. We may assume that 
$P_{n_1+1} \subseteq f_{n_1+1, n_2+1}(T)$.  Let $R'={\rm Cl}_{X_\infty}(\pi_{n_1+1}^{-1}(R_{n_1+1}^0)) \setminus \pi_{n_1+1}^{-1}(R_{n_1+1}^0)$ and  $Q'={\rm Cl}_{X_\infty}(\pi_{n_1+1}^{-1}(Q_{n_1+1}^0)) \setminus \pi_{n_1+1}^{-1}(Q_{n_1+1}^0)$. Then, $R'$ and $Q'$ are pairwise disjoint subcontinua of $X_{\infty}$.
 Also, it is not difficult to see that if $Z$ is a subcontinuum of $X_\infty$ such that $R' \cap Z \neq \emptyset \neq Q' \cap Z$ and $R' \setminus Z \neq \emptyset \neq Q' \setminus Z$, then $\pi_{n_1+1}(Z)$ is a subcontinuum of $X_{n_1+1}$ such that $({\rm Cl}_{X_{n_1+1}}(R_{n_1+1}^0) \setminus R_{n_1+1}^0 ) \cap \pi_{n_1+1}(Z) \neq \emptyset \neq ({\rm Cl}_{X_{n_1+1}}(Q_{n_1+1}^0) \setminus Q_{n_1+1}^0) \cap \pi_{n_1+1}(Z) $ and $({\rm Cl}_{X_{n_1+1}}(R_{n_1+1}^0) \setminus R_{n_1+1}^0 ) \setminus \pi_{n_1+1}(Z)  \neq \emptyset \neq ({\rm Cl}_{X_{n_1}+1}(Q_{n_1+1}^0) \setminus Q_{n_1+1}^0 ) \setminus \pi_{n_1+1}(Z)  $.  However, there is not such a subcontinuum in $X_{n_1+1}$. Hence, this is a contradiction. Therefore,  there is not a subcontinuum $Z$ of $X_\infty$ such that $R' \cap Z \neq \emptyset \neq Q' \cap Z$ and $R' \setminus Z \neq \emptyset \neq Q' \setminus Z$. Hence, if (i) holds, then $D$ is not a $D^{*}$-continuum. We also see that $D$ is not a $D^{*}$-continuum if (ii) holds. This completes the proof of Claim 3.

 \vspace{2mm}

 Thus, $X_\infty$ is a $D^{**}$-continuum containing neither Wilder continua nor $D^{*}$-continua.  \end{proof}

Also, we can get the following result. The proof is very similar to the proof of Theorem \ref{mainex}. The essential difference is the continuum obtained by blowing up a point of the factor space at each step. Hence, we only write about it in the proof.

\begin{theorem}
There exists a Wilder continuum which contains no $D^{*}$-continua. 
\label{wilder}
\end{theorem}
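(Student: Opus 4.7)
My plan is to retrace the proof of Theorem \ref{mainex} with exactly one substitution: the base planar continuum $B$ used in every blow-up step is replaced by a planar continuum $B'$ designed so that each factor space $X_n$ is itself Wilder. The entire scheme---choosing a blow-up sequence $\{d_n\}$ with $d_n \in X_n \setminus S(X_n)$, constructing the surjective monotone bonding maps $f_n \colon X_{n+1} \to X_n$, and forming the inverse limit $X_\infty = \underleftarrow{{\rm lim}}\{X_n,f_n\}$---is otherwise identical, and inherits all the bookkeeping notation of Theorem \ref{mainex}.

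The continuum $B'$ should have two features: (a) $B'$ is a Wilder continuum, and (b) $B'$ contains a distinguished pair of disjoint sin-$\tfrac{1}{x}$-type rays $R^0,Q^0 \subset B'$ whose limit arcs $L_R = {\rm Cl}_{B'}(R^0) \setminus R^0$ and $L_Q = {\rm Cl}_{B'}(Q^0) \setminus Q^0$ are disjoint and have the property that every subcontinuum of $B'$ meeting both of them contains at least one of them entirely. The intended construction takes the ``upper half'' $P$ of the continuum $B$ from Theorem \ref{mainex}---which already carries the pair $R^0,Q^0$ approaching a common circle---and modifies the surrounding frame to provide sufficient alternative accessibility so that no point of $L_R \cup L_Q$ is a point of non-Wilderness in $B'$, while preserving the rigidity of $(L_R,L_Q)$ against proper connecting subcontinua.

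With $B'$ fixed, an induction on $n$ (with a case analysis on the locations of three distinct points relative to the freshly inserted copy $B_{n+1}$, invoking (a) and the Wilderness of $X_n$) establishes that each $X_n$ is Wilder. The remark following Example \ref{continuumxx}, which states that inverse limits of Wilder continua under surjective monotone upper semi-continuous bonding functions are Wilder, then gives that $X_\infty$ itself is Wilder. For the assertion that $X_\infty$ contains no $D^*$-continuum, Claim 3 of Theorem \ref{mainex} transfers verbatim: for any nondegenerate subcontinuum $D \subseteq X_\infty$ one selects indices $n_0 < n_1 < n_2$ as there, forms inside the copy $B_{n_1+1} \cong B'$ the pair $R' = {\rm Cl}_{X_\infty}(\pi_{n_1+1}^{-1}(R^0_{n_1+1})) \setminus \pi_{n_1+1}^{-1}(R^0_{n_1+1})$ and $Q'$ analogously, and observes via (b) that no subcontinuum of $X_\infty$ properly intersects both $R'$ and $Q'$, so $D$ is not a $D^*$-continuum.

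The main obstacle is the simultaneous achievement of (a) and (b) in the concrete construction of $B'$: these properties pull in opposite directions, since the standard sin-$\tfrac{1}{x}$ non-$D^*$ witness is precisely the structure that produces points of non-Wilderness on its limit arcs. The construction therefore requires a delicate planar gluing that allows individual points of each limit arc to be reached by subcontinua avoiding any other prescribed point (restoring Wilderness at the three-point level), while still forbidding any subcontinuum meeting both $L_R$ and $L_Q$ from properly intersecting each (preserving the failure of $D^*$ at the subcontinuum level).
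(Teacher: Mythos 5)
Your proposal follows essentially the same route as the paper: the paper's proof of this theorem consists precisely of rerunning the Theorem \ref{mainex} machinery after swapping the base continuum $B$ for a Wilder planar continuum (given in Figure \ref{continuumwilder}) that still carries the disjoint $\sin\frac{1}{x}$-type rays whose limit sets obstruct the $D^{*}$ property, with Claim 3 transferring verbatim and the Wilder property of $X_\infty$ following from the inverse-limit result for Wilder continua. The one ingredient you leave as an ``obstacle''---the explicit $B'$ realizing both (a) and (b)---is exactly what the paper supplies (only by a figure, without further verification), so your outline matches the paper's argument in both structure and level of detail.
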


\begin{proof}
The proof of this theorem can be easily obtained by replacing the corresponding continua drawn in Figures \ref{continuumx} and \ref{???} with the continua drawn in Figures \ref{continuumwilder} and \ref{continuumwilder2} in the proof of the previous theorem.
\end{proof}
\begin{figure}
\includegraphics[scale=0.25]{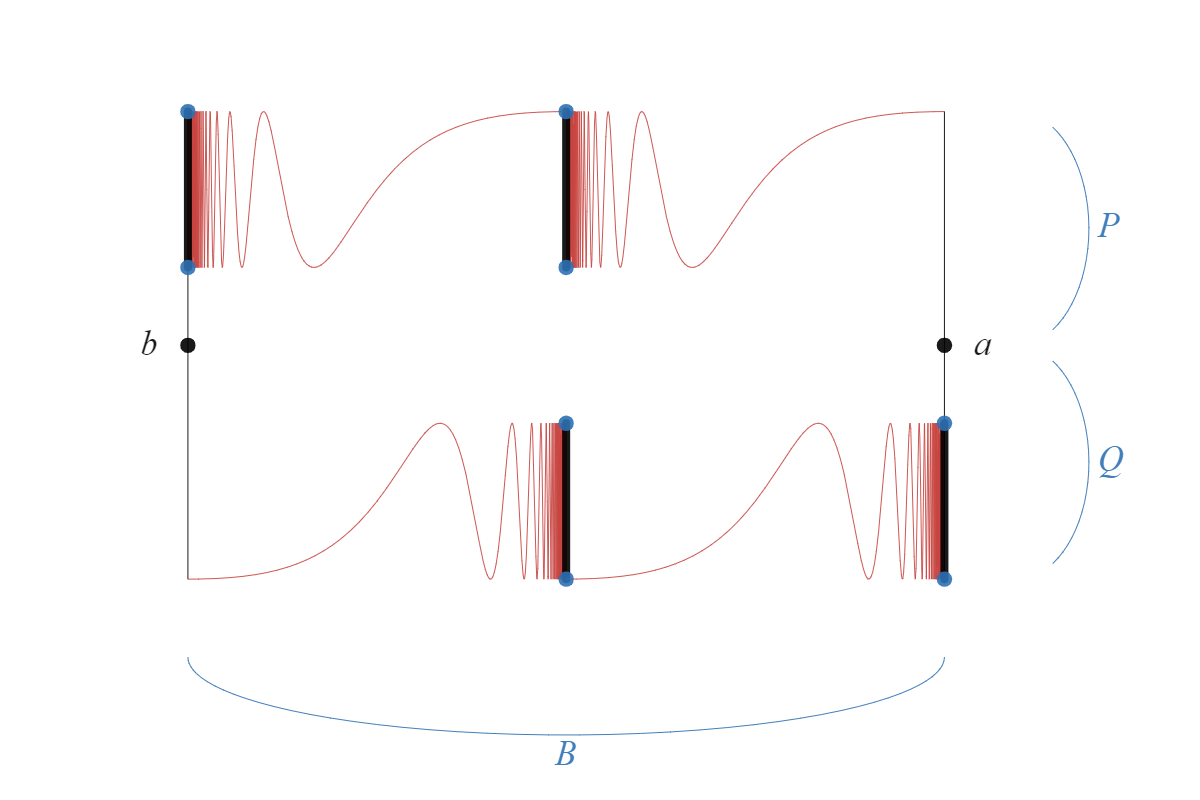}
\caption{The continuum $B$}
\label{continuumwilder}
\end{figure}

\begin{figure}
\includegraphics[scale=0.28]{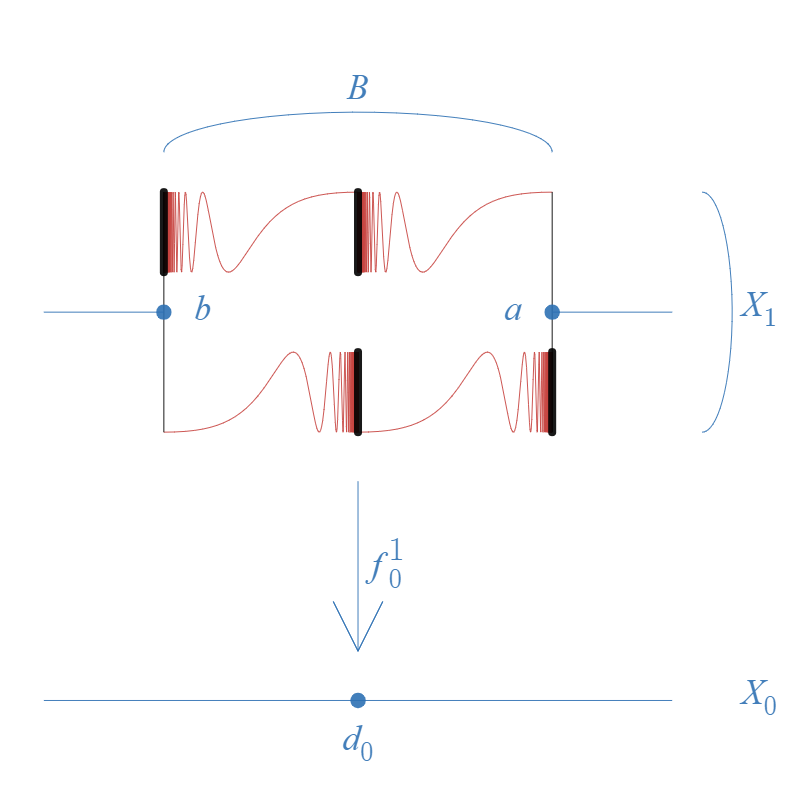}
\caption{$X_0,\ X_1 \ {\rm and} \ f_0^1.$}
\label{continuumwilder2}
\end{figure}

Furthermore, we can get the following result. 

\begin{theorem}
There exists a $D^{*}$-continuum which contains no Wilder continua. 
\label{nowilder}
\end{theorem}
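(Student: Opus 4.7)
The plan is to follow the Janiszewski-style blow-up scheme used in the proof of Theorem~\ref{mainex} essentially verbatim, but to replace the building-block continuum $B$ of Figure~\ref{continuumx} by a new planar continuum $B'$ which is itself a $D^{*}$-continuum. One then repeats the construction of the inverse sequence $\{X_{n},f_{n}\}_{n=0}^{\infty}$ with single-valued monotone bonding maps obtained by blowing up a carefully chosen sequence $\{d_{n}\}$ of points to copies of $B'$, and proves the analogues of Claim~1 and Claim~2 for $X_{\infty}$. Claim~3 of Theorem~\ref{mainex} is simply dropped, since we now want $X_{\infty}$ to be $D^{*}$.

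The main design question is the choice of $B'$. It must be a $D^{*}$-continuum, but it also has to retain the same ``distinguished-point skeleton'' $a,b,p,q,p',q',\ldots$ that drove Claim~2 of Theorem~\ref{mainex}. The natural attempt is to take $B'$ to look like $B$ but with the four $\sin\tfrac1x$-tails $R^{0},Q^{0},R^{1},Q^{1}$ deleted and replaced by arcs that attach the tips $p,q,p',q'$ directly into the two small simple closed curves. This keeps $B'$ planar, arcwise connected and $D^{*}$, while leaving the two simple closed curves of $B$ (with their four distinguished points $p,q,p',q'$) in place. The point of non-Wilderness in subcontinua of $X_{\infty}$ will then not come from a $\sin\tfrac1x$-tail inside a single $B_{n}$, as in Theorem~\ref{mainex}, but from the Janiszewski blow-up dynamics: at index $n_{2}$ a fresh copy $B_{n_{2}+1}$ has been inserted into the image of the nondegenerate subcontinuum $D$ inside $B_{n_{1}+1}$, and this insertion, together with the position of the two attachment points on the circles of $B_{n_{1}+1}$, forces any subcontinuum of $\pi_{n_{2}+1}(D)$ that joins $f_{n_{1}+1,n_{2}+1}^{-1}(a_{n_{1}+1})$ to $f_{n_{1}+1,n_{2}+1}^{-1}(b_{n_{1}+1})$ to meet both $p_{n_{1}+1}$ and $q_{n_{1}+1}$ (or both $p'_{n_{1}+1}$ and $q'_{n_{1}+1}$).

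With $B'$ in hand, the steps are: (a) show by induction on $n$ that each $X_{n}$ is a $D^{*}$-continuum (the blow-up of a $D^{*}$-continuum at an interior point of an arc by a $D^{*}$-continuum attached via two designated points preserves $D^{*}$); (b) conclude $X_{\infty}$ is a $D^{*}$-continuum via the single-valued analogue of Lemma~\ref{inverselimitd**} for $D^{*}$, which is obtained by exactly the same argument as \cite[Theorem 4.3]{D}; note that this uses single-valued monotone bonding maps, so Theorem~\ref{invd*} does not obstruct us; (c) prove ``$X_{\infty}$ contains no Wilder continuum'' by rerunning Claim~2 of the proof of Theorem~\ref{mainex} verbatim, producing the points of non-Wilderness $\pi_{n_{1}+1}^{-1}(p_{n_{1}+1})$ and $\pi_{n_{1}+1}^{-1}(q_{n_{1}+1})$ inside an arbitrary nondegenerate subcontinuum $D\subseteq X_{\infty}$. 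The hard part is step~(a) combined with the choice of $B'$: one must remove enough of $B$ to make the building block itself $D^{*}$, yet keep enough of the distinguished-point configuration on the two small circles so that the two-level blow-up argument of Claim~2 continues to extract a genuine non-Wilder pair. Verifying that ``arc-replacement'' of the four $\sin\tfrac1x$ tails accomplishes both of these simultaneously is the only real content beyond what is already written in the proof of Theorem~\ref{mainex}.
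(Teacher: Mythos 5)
Your overall strategy --- rerun the blow-up scheme of Theorem \ref{mainex} with a new building block that is itself $D^{*}$, prove $D^{*}$-ness of each $X_{n}$ by induction, pass to the limit via the single-valued analogue of Lemma \ref{inverselimitd**} (so Theorem \ref{invd*} is indeed no obstruction), and rerun Claim 2 --- is exactly the paper's strategy. But your choice of building block breaks the construction at the decisive point. If you delete the four $\sin\frac{1}{x}$-tails of $B$ and reattach the tips to the two circles by arcs, the resulting $B'$ is arcwise connected; in particular it contains an arc from the attachment point $a$ to the attachment point $b$. Then at each stage one can choose an arc $A_{n+1}\subseteq f_{n}^{-1}(A_{n})$ mapping monotonically onto $A_{n}$ (insert the $a$--$b$ arc of the newly glued copy of $B'$ in place of the blown-up point $d_{n}$), so $X_{\infty}$ contains the inverse limit of an inverse sequence of arcs with surjective monotone bonding maps, i.e.\ an arc. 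An arc is a Wilder continuum (it is hereditarily arcwise connected, cf.\ Corollary \ref{corhere}), so your $X_{\infty}$ contains Wilder continua and the analogue of Claim 2 is simply false. The whole point of the Janiszewski/Hagopian--Oversteegen scheme is that $B$ admits no arc joining $a$ to $b$; that is the feature you must not destroy.

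The same problem surfaces in your stated mechanism for non-Wilderness. Knowing that every subcontinuum of $\pi_{n_2+1}(D)$ joining $f_{n_1+1,n_2+1}^{-1}(a_{n_1+1})$ to $f_{n_1+1,n_2+1}^{-1}(b_{n_1+1})$ must meet both $p_{n_1+1}$ and $q_{n_1+1}$ only shows that $D$ contains both of the points $\pi_{n_1+1}^{-1}(p_{n_1+1})$ and $\pi_{n_1+1}^{-1}(q_{n_1+1})$; to conclude that these are points of non-Wilderness of $D$ you need that every subcontinuum of $D$ containing the chosen third point and meeting one of them contains the other, and that is exactly what the remainders of the $\sin\frac{1}{x}$-rays provide (a subcontinuum meeting a ray and its remainder swallows the whole remainder, hence both distinguished points). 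With arcs in place of rays one can reach $p$ while avoiding $q$. The paper's actual fix is different: it keeps compactified rays but attaches \emph{two} rays to each circle so that each ray's remainder is only a half-circle (the curves $T_1$, $T_2$ composed with $R(\pm\frac{\pi}{2})$ in the displayed definition of the new $B$). This is what buys $D^{*}$-ness --- a continuum can now contain one half-circle without containing the whole circle, so condition (ii) in the definition of a $D^{*}$-continuum can be satisfied against the circle --- while each half-circle is still a remainder of a ray, so the non-Wilder pair survives. To repair your proposal, replace the tails by such a two-rays-per-circle configuration rather than by genuine arcs.
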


\begin{proof}
As in the previous theorem, the proof of this theorem can be obtained by replacing the corresponding continua drawn in  Figures \ref{continuumx} and \ref{???} with  the continua drawn in Figures \ref{continuumwilder3} and \ref{continuumwilder4} in the proof of Theorem \ref{mainex}. 
To be exact, let 

$$
\begin{array}{ccl}
    T_1 & = & \{(t+|\cos{(\arcsin{(\frac{1}{1+t}(\sin{(\frac{1}{t})}+t))})}|,\frac{1}{1+t}(\sin{(\frac{1}{t})}+t)) \in \mathbb{R}^2 \ | \ 0 < t \leq 1\},\\

    p_1 & = & (1+|\cos{(\arcsin{(\frac{1}{2}\sin{(1)}+\frac{1}{2})})}|,\frac{1}{2}\sin{(1)}+\frac{1}{2}),\\

    T_2 & = & \{(t+|\cos{(\arcsin{(\frac{1}{1+t}(\sin{(\frac{1}{t})}-t))})}|,\frac{1}{1+t}(\sin{(\frac{1}{t})}-t)) \in \mathbb{R}^2 \ | \ 0 < t \leq 1\},\\

    p_2 & = & (1+|\cos{(\arcsin{(\frac{1}{2}\sin{(1)}-\frac{1}{2})})}|,\frac{1}{2}\sin{(1)}-\frac{1}{2}),\\

    R(\theta) & = & \begin{pmatrix} \cos{(\theta)} & -\sin{(\theta)} \\ \sin{(\theta)} & \cos{(\theta)} \end{pmatrix},\\

    pq & = & \{(1-\lambda) p + \lambda q \ | \ 0 \leq \lambda \leq 1\} \ ({\rm for \ each} \ p,q \in \mathbb{R}^2),\\

    S^1 & = & \{(\cos{(t)},\sin{(t)}) \in \mathbb{R}^2 \ | \ 0 \leq t \leq 2\pi\}, \ {\rm and}
\end{array}
$$

$$
\begin{array}{ccl}
    B & = & (6,4)(R(\frac{\pi}{2})p_2+(0,4)) \cup (R(\frac{\pi}{2})T_2+(0,4))\\
      &   & \cup (6,4)(R(-\frac{\pi}{2})p_1+(0,4)) \cup (R(-\frac{\pi}{2})T_1+(0,4))\\
      &   & \cup (S^1+(0,4))\\
      &   & \cup (R(\frac{\pi}{2})T_1+(0,4)) \cup (R(\frac{\pi}{2})p_1+(0,4))(-6,4)\\
      &   & \cup (R(-\frac{\pi}{2})T_2+(0,4)) \cup (R(-\frac{\pi}{2})p_2+(0,4))(-6,4)\\
      &   & \cup (-6,4)(-6,-4)\\
      &   & \cup (-6,-4)(R(-\frac{\pi}{2})p_2+(0,-4)) \cup (R(-\frac{\pi}{2})T_2+(0,-4))\\
      &   & \cup (-6,-4)(R(\frac{\pi}{2})p_1+(0,-4)) \cup (R(\frac{\pi}{2})T_1+(0,-4))\\
      &   & \cup (S^1+(0,-4))\\
      &   & \cup (R(-\frac{\pi}{2})T_1+(0,-4)) \cup (R(-\frac{\pi}{2})p_1+(0,-4))(6,-4)\\
      &   & \cup (R(\frac{\pi}{2})T_2+(0,4)) \cup (R(\frac{\pi}{2})p_2+(0,-4))(6,-4)\\
      &   & \cup (6,-4)(6,4).
\end{array}
$$

Then, using the same idea as the proof of Theorem \ref{mainex}, we can prove this theorem.
\end{proof}

\begin{figure}[H]
\includegraphics[scale=0.31]{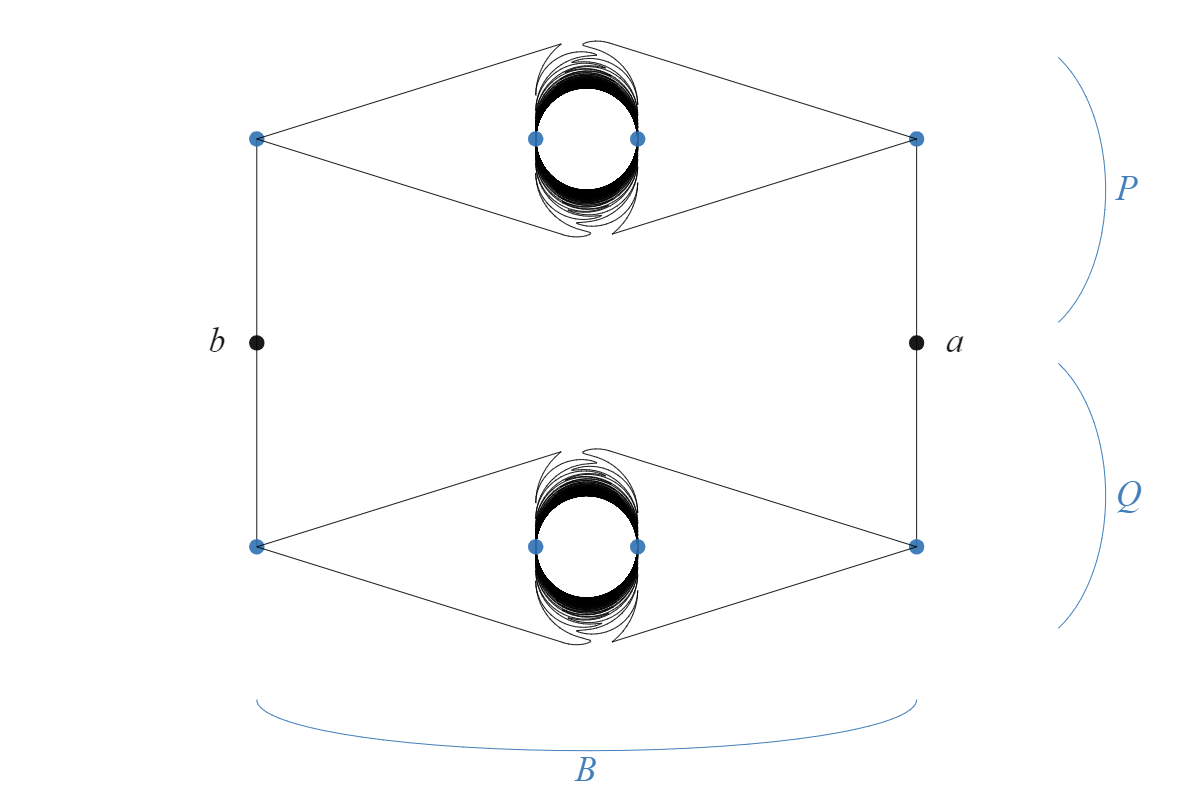}
\caption{The continuum $B$}
\label{continuumwilder3}
\end{figure}

\begin{figure}[H]
\hspace{2pc}\includegraphics[scale=0.28]{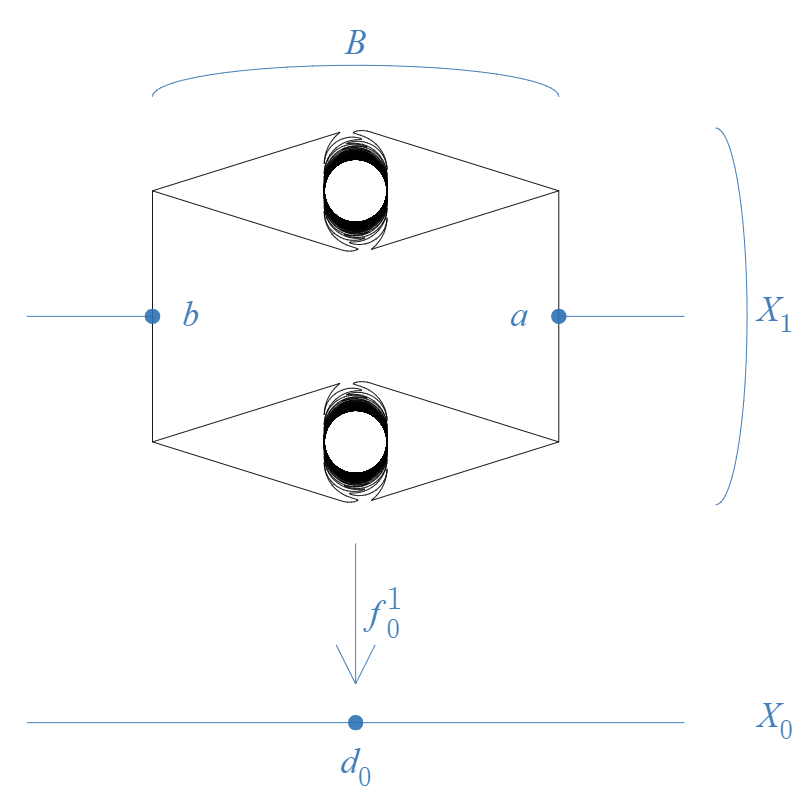}
\caption{$X_0,\ X_1 \ {\rm and} \ f_0^1.$}
\label{continuumwilder4}
\end{figure}

\medskip

\end{document}